\theoremstyle{definition}
\newtheorem{theorem}{Theorem}[section]
\newtheorem{lemma}[theorem]{Lemma}
\newtheorem{proposition}[theorem]{Proposition}
\newtheorem{definition}{Definition}[section]
\newtheorem{example}{Example}[section]
\newtheorem{conjecture}{Conjecture}[section]
\newcommand{\sym}[1]{\mathfrak{S}_{#1}}
\newcommand{\atoms}[1]{R_{\bullet}(#1)}
\newcommand{\red}[1]{\textcolor{red}{#1}}
\newcommand{\green}[1]{\textcolor{green}{#1}}
\newcommand{\uwidehat}[1]{%
	\mathpalette\douwidehat{#1}%
}
\newcommand{\douwidehat}[2]{%
	\sbox0{$\m@th#1\widehat{\hphantom{#2}}$}%
	\sbox2{$\m@th#1x$}
	\sbox4{$\m@th#1#2$}
	\dimen0=\ht0
	\advance\dimen0 -.8\ht2
	\dimen2=\dp4
	\rlap{%
		\raisebox{\dimexpr\dimen0-\dimen2}{%
			\scalebox{1}[-1]{\box0}%
		}%
	}%
	{#2}%
}
\newcommand{\tupp}[2]{\uwidehat{#1}\,\widehat{#2}}
\newcommand{\tdownn}[2]{\widehat{#1}\,\uwidehat{#2}}
\newcommand{\unn}[1]{\uwidehat{#1}}
\newcommand{\ovv}[1]{\widehat{#1}}
\newcommand{\p}[1]{\text{p}(#1)}
\newcommand{\vv}[1]{\text{v}(#1)}
\newcommand{\pv}[1]{\text{pv}(#1)}
\renewcommand{\l}{\ell}
\title{Maximum number of one-element commutation classes of a permutation}
\author[1]{Ricardo Mamede}
\author[1]{José Luis Santos}
\author[1]{Diogo Soares}
\affil[1]{University of Coimbra, CMUC, Department of Mathematics}
\begin{document}
	\maketitle

\begin{abstract}
	In this paper, we provide an upper bound for the number of one-element commutation classes of a permutation, that is, the number of reduced words in which no commutation can be applied. Using this upper bound, we prove a conjecture that relates the number of reduced words with the number of commutation classes of a permutation.
\end{abstract}

\section{Introduction}
Given a positive integer $n\geq 1$, let $\sym{n+1}$ denote the \textit{symmetric group} on $n+1$ elements formed by all permutations of the set $[n+1]:=\{1,2,\ldots,n+1\}$ with composition (read from the right) as group operation. To denote a permutation $\sigma \in \sym{n+1}$, we will use the \textit{one-line notation} $\sigma:=(\sigma(1),\sigma(2),\ldots,\sigma(n+1))$, as well as the classic \textit{cycle notation}, where cycles are written in round brackets and the images separated by spaces. For instance, $\sigma=(3,4,1,5,2)\in \sym{5}$ can be expressed in cycle notation as $\sigma=(1\ 3)(2\ 4\ 5)$. \par  
The symmetric group is a famous example of a more general family of groups called \textit{Coxeter groups}, which are groups generated by a set of involutions (elements of order 2) subject to some specific relations (see \cite{Brenti,Humphreys_1990}). In this case, the group $\sym{n+1}$ is generated by the involutions $\{s_i: i\in [n]\}$, where $s_i:=(1,\ldots
,i+1,i,\ldots,n+1)=(i\ i+1)$, which satisfy the following relations: 
\begin{align}
s_is_j&=s_js_i,\;\text{if }|i-j|>1 \label{commrel}\\
s_{i}s_{i+1}s_i&=s_{i+1}s_{i}s_{i+1},\;i\in [n-1] \label{braidrel},
\end{align}
with \eqref{commrel} being called a \textit{commutation}, and \eqref{braidrel} a \textit{braid relation}. \par 
Every permutation $\sigma$ can be written as a product $s_{i_1}s_{i_2}\cdots s_{i_l}$ of generators, with $i_j\in [n]$ for all $j\in [l]$. When $l$ is minimal, we say that $s_{i_1}s_{i_2}\cdots s_{i_l}$ is a \textit{reduced decomposition}, and $i_1i_2\cdots i_l$ a \textit{reduced word} of $\sigma$. The integer $l:=\l(\sigma)$ is the \textit{length} of $\sigma$, and it corresponds to the number of \textit{inversion pairs} of $\sigma$, that is, 
$$\l(\sigma)=|\{(\sigma(i),\sigma(j)):i<j,\sigma(i)>\sigma(j)\}|.$$
We denote by $R(\sigma)$ the set of reduced words of $\sigma$. As an example, if $\sigma=(3,4,2,1)$, then $\l(\sigma)=5$ and $R(\sigma)=~\{12132,12312,21232,21323,23123\}$, which corresponds to all possible sequences of indices of the generators for a reduced decomposition of $\sigma$. 
A well-known result of Tits \cite{TITS1969175} states that two reduced words for the same permutation differ by a finite sequence of commutations and/or braid relations. \par
We can define an equivalence relation $\sim$ on the set $R(\sigma)$ by setting $\textbf{a}\sim \textbf{b}$ if these words differ by a sequence of commutations. The equivalence classes generated by $\sim$ are the \textit{commutation classes} of $\sigma$, being the commutation class of $\textbf{a}\in R(\sigma)$ denoted by $[\textbf{a}]$.  The set of all commutation classes of $\sigma$ is denoted by $C(\sigma)$. For $\sigma=(3,4,2,1)$, we have three distinct commutation classes: $[12132]=\{12132,12312\}$, $[21232]=\{21232\}$ and $[21323]=\{21323,23123\}$. These objects were already considered by some authors and are related to other topics in Discrete Mathematics such as Higher Bruhat orders, pseudoline arrangements and rhombic tilings of polygons (see \cite{comutClass2,ELNITSKY1997193,Elder,BEDARD1999483,MAMEDE2022113055,FELSNER200167}). \par 
In general, a commutation class may contain several reduced words. For instance, the number of reduced words inside the commutation class of the word $1\cdot21\cdot 321\cdots n(n-1)\cdots 1$ is equinumerous with the number of shifted standard staircase tableaux \cite{SCHILLING201715}, all of these words being reduced words for the longest permutation $w_0:=(n+1,n,\ldots,1)$. However, it can happen an extreme situation in which a reduced word is its entire commutation class, that is, a word $\textbf{a}\in R(\sigma)$ satisfying $[\textbf{a}]=\{\textbf{a}\}$. A reduced word with such a property is called \textit{one-element commutation class}, and we denote by $\atoms{\sigma}$ the set of one-element commutation classes of $\sigma$.  For instance, the word $\textbf{a}=21232$ is the only one-element commutation class of $\sigma=(3,4,2,1)$. In the case of the longest element, we have $|\atoms{w_0}|=4$ for all $n\geq 3$, a result first obtained in \cite{comutClass} and then recovered by Tenner \cite{Tenner}. This enumeration was later generalized for an arbitrary involution by the authors \cite{OEC}, showing that the number of one-element commutation classes of an involution is either $0$, $1$, $2$ or $4$. An immediate consequence is that an involution cannot have more than four one-element commutation classes. Could it be possible for a general permutation to have more than four such words? The main result of this paper states that such a permutation cannot exist.

\begingroup
\renewcommand\thetheorem{A}
\begin{theorem}[Theorem \ref{bound}]
	Let $n\geq 1$. Then, for all $\sigma\in \sym{n+1}$ we have $|\atoms{\sigma}|\leq 4$.  
	\label{thm1}
\end{theorem}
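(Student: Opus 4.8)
The plan is to translate the statement into a question about $\pm1$ lattice walks and then to run an induction on $n$ organised around the behaviour of a reduced word at its largest and smallest letters.

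First I would record the elementary but crucial reformulation: a reduced word $\mathbf a=a_1\cdots a_\l\in R(\sigma)$ is a one-element commutation class if and only if $|a_{k+1}-a_k|=1$ for every $k$. Indeed, a commutation is applicable to $\mathbf a$ precisely when $|a_{k+1}-a_k|>1$ for some $k$, while $a_{k+1}=a_k$ is impossible because $\mathbf a$ is reduced. Thus $\atoms{\sigma}$ is exactly the set of reduced words of $\sigma$ whose consecutive letters differ by exactly $1$; I will think of each such word as a lattice walk $a_1,\dots,a_\l$ with unit steps, and call these the \emph{zigzag words} of $\sigma$. The whole problem becomes: bound the number of zigzag reduced words of $\sigma$.

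Next I would reduce to an indecomposable situation. The set of letters appearing in a reduced word of $\sigma$ is the same for every reduced word of $\sigma$; call it the support. A zigzag word cannot have two letters that differ by at least $2$ in consecutive positions, hence cannot step from one connected component of the support to another; so if the support is disconnected then $\atoms{\sigma}=\varnothing$, while if the support is an interval $[c,d]$ with $d-c+1<n$ then $\sigma$ lies in a standard parabolic subgroup isomorphic to $\sym{d-c+2}$ and, after shifting letters, $\atoms{\sigma}$ is identified with the corresponding set for a permutation in that smaller symmetric group, so we conclude by induction on $n$. The base cases $n\le 2$ are immediate (the maximum there is $|\atoms{w_0}|=2$ for $w_0\in\sym3$, with the two zigzag words $121$ and $212$). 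Hence from now on every reduced word of $\sigma$ uses all of $1,\dots,n$, and in particular every zigzag word of $\sigma$ visits both the minimal height $1$ and the maximal height $n$.

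With this in place, the core of the argument is a rigidity statement proved inside the wiring (pseudoline) diagram of a zigzag word — $n+1$ strands, the $k$-th crossing occurring between the two strands currently in columns $a_k,a_k+1$, every pair of strands crossing exactly once. The letter $n$ records a crossing in the two rightmost columns and every occurrence of $n$ in the walk is a local maximum flanked by $n-1$; I would analyse these occurrences together with the trajectory of the strand starting in the rightmost column, and dually for the letter $1$. What I expect to extract is: (i) once the walk has visited both $1$ and $n$, the remaining part of the word is uniquely determined by $\sigma$ — the tail behaves like a damped oscillation, in fact like a reduced word of a smaller‑range \emph{involution}, so the bound of $\le 4$ from \cite{OEC} can be brought to bear; and (ii) before both extremes have been met there are at most four "launch configurations" compatible with being a reduced word of $\sigma$ — essentially a choice of whether the walk first reaches $1$ or $n$, refined by one further binary choice. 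Feeding these into the induction yields $|\atoms{\sigma}|\le 4$.

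The main obstacle is precisely the rigidity in (i)–(ii). The four zigzag words of $w_0\in\sym4$, namely $123212,\ 321232,\ 212321,\ 232123$, together with the longer examples for $\sym5$, already show that a zigzag reduced word need not be monotone up to its first extreme value, that its interior local maxima and minima need not be global, and that genuine branching can occur while the word is built up letter by letter; so one cannot hope to see the bound from a naive "the next step is always forced" argument. The heart of the proof will therefore be a careful case analysis — using the strand trajectories and reducedness to control how a prefix of a zigzag word of $\sigma$ can be completed — that globally caps the number of completions at four even in the presence of branching, with the involution case \cite{OEC} supplying the bound for the portion of the word after both extremes have appeared.
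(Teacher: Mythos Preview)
Your preliminary reductions are sound, but claims~(i) and~(ii) are both false as stated, and that is where the proof lives. For~(i): the suffix after both extremes have been visited is \emph{not} determined by $\sigma$. In the paper's Example~\ref{ex:segs}, $\sigma\in\sym{10}$ has four zigzag reduced words, each of shape $\textbf{p}\,\ovv{7}\,\unn{1}\,\ovv{9}\,\unn{2}\,\textbf{q}$ with $\textbf{p}\in[2,6]^*$ and $\textbf{q}\in[3,8]^*$; since each $\textbf p$ is an oscillation with right endpoint equal to its maximal letter, Lemma~\ref{endpoints2} allows at most one such $\textbf p$ for any fixed $\sigma_{\textbf p}$, so if the four tails $\textbf q$ coincided there could be only one word. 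Hence the tails genuinely differ. Nor are they reduced words of involutions in general, so the bound from \cite{OEC} does not apply --- and even if it did, it would multiply rather than cap the count. For~(ii): in that same example $\textbf p\in[2,6]^*$ avoids both extremes, so all four words visit $1$ before $9$ and your ``which extreme first'' dichotomy collapses to a single branch; the real variability sits in the pair $(\textbf p,\textbf q)$, whose associated permutations have \emph{overlapping} support and can trade structure against one another.

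The paper resolves this by a different organisation. It inducts on $\ell(\sigma)$ rather than $n$, and splits on whether the words of $\atoms{\sigma}$ are \emph{oscillations} (segment lengths monotone), which Proposition~\ref{osc2} shows is a property of $\sigma$ alone. In the oscillation case the four choices in $\{m,M\}\times\{\text{left endpoint},\text{right endpoint}\}$ each pin down at most one word (Lemma~\ref{endpoints2}, Theorem~\ref{maxosc}); this is the regime where your tail-rigidity intuition is actually correct. In the non-oscillation case every word carries a fixed central factor $\ovv{j}\,\unn{m}\,\ovv{M}\,\unn{i}$ determined by $\sigma$; if $j<i$ the flanks $\textbf p,\textbf q$ have disjoint supports and the word is unique (Proposition~\ref{p1}), while if $j\ge i$ one \emph{deletes} a subfactor of that centre to obtain an injection $\atoms{\sigma}\hookrightarrow\atoms{\sigma\gamma^{-1}}$ into a strictly shorter permutation (Proposition~\ref{p2}), closing the induction. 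It is this factor-deletion injection, not any tail rigidity, that handles the overlap phenomenon your outline runs into.
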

\endgroup

This result is interesting just by itself as it shows that the number of words for which we can't apply any commutation is very limited. Moreover, it can also be used to prove a conjecture proposed in \cite{Elder} that relates the cardinalities of $R(\sigma)$ and $C(\sigma)$.
\begingroup
\renewcommand\thetheorem{B}
\begin{theorem}[Theorem \ref{conjecture}]
	For all $\sigma \in \sym{n+1}$, we have $\displaystyle|C(\sigma)|\leq\frac{1}{2}|R(\sigma)|+1$. 
	\label{thm2}
\end{theorem}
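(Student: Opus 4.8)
The plan is to turn the statement into an elementary counting inequality for the partition of $R(\sigma)$ into commutation classes. Since those classes partition $R(\sigma)$, we have $|R(\sigma)|=\sum_{[\mathbf a]\in C(\sigma)}|[\mathbf a]|$. Put $k:=|\atoms{\sigma}|$, the number of singleton classes; Theorem~\ref{bound} gives $k\le 4$. Every non-singleton class has at least two elements, so
\[
|R(\sigma)|\ \ge\ k+2\bigl(|C(\sigma)|-k\bigr)\ =\ 2|C(\sigma)|-k,
\]
and hence $|C(\sigma)|\le\tfrac12|R(\sigma)|+\tfrac{k}{2}\le\tfrac12|R(\sigma)|+2$. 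In particular the desired inequality already holds when $k\le 2$; and since $2|C(\sigma)|-|R(\sigma)|$ has the same parity as $|R(\sigma)|$, it also holds when $k=3$ and $|R(\sigma)|$ is even. What remains is to save one further unit in the cases $k=4$, and $k=3$ with $|R(\sigma)|$ odd.

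To do this I would keep track of the large classes. If $m$ denotes the number of commutation classes of $\sigma$ with at least three elements, then the same bookkeeping sharpens to $|R(\sigma)|\ge 2|C(\sigma)|-k+m$, so it suffices to prove $m\ge k-2$. Thus everything reduces to a structural assertion: a permutation with four one-element commutation classes has at least two commutation classes of size $\ge 3$, and one with three one-element classes has at least one.

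To establish this I would lean on the analysis of extremal permutations carried out in the proof of Theorem~\ref{bound}; in the involutive case this is essentially \cite{OEC}, where $|\atoms{\sigma}|$ never takes the value $3$ and equals $4$ only for involutions of a rather restricted form, so there the task splits into checking finitely many local shapes together with the one genuinely infinite family, the longest element $w_0\in\sym{n+1}$ with $n\ge 3$. For $w_0$ the commutation class of the word $1\cdot21\cdot321\cdots n(n-1)\cdots1$ is, as noted in the introduction, equinumerous with the shifted standard staircase tableaux, hence has far more than three elements once $n\ge 3$; applying the symmetry $i\mapsto n+1-i$ (or word reversal) produces a second class of the same size which is distinct from the first for $n$ large, and the case $n=3$ is a direct check. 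For any non-involution shapes thrown up by the classification (if any survive, since plausibly $|\atoms{\sigma}|\ge 3$ already forces $\sigma$ to be an involution) one argues the same way, by exhibiting a reduced word that admits two independent commutations, which forces its class to have size $\ge 3$.

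The main obstacle is precisely this last step: it requires the classification of the permutations with $|\atoms{\sigma}|\in\{3,4\}$ to be explicit enough that $m\ge k-2$ can be verified in every case, and in particular a uniform treatment of the longest elements, where $|R(\sigma)|$ and $|C(\sigma)|$ are both large but $|R(\sigma)|$ dwarfs $|C(\sigma)|$. The counting part, by contrast, is routine once Theorem~\ref{bound} is in hand.
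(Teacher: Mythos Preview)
Your opening counting argument is exactly the paper's: from $|R(\sigma)|\ge 2|C(\sigma)|-|\atoms{\sigma}|$ and Theorem~\ref{bound} one gets $|C(\sigma)|\le\tfrac12|R(\sigma)|+2$, and the whole problem is to save one unit. Your reduction to ``$m\ge k-2$ classes of size $\ge 3$'' is also sound bookkeeping. The gap is in how you propose to secure that structural fact: you want to lean on a classification of the permutations with $|\atoms{\sigma}|\in\{3,4\}$, but no such classification is available---Theorem~\ref{bound} is proved by an inductive injection into a shorter permutation, not by an explicit list, and whether $|\atoms{\sigma}|=3$ ever occurs is in fact left open (Conjecture~5.1). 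Your treatment of $w_0$ is fine, but it is only one example, and the ``finitely many local shapes'' you allude to are not actually isolated anywhere in the paper.

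The paper's endgame is different and avoids classification entirely. Instead of asking for $k-2$ classes of size $\ge 3$, it asks for a \emph{single} class of size $\ge 4$, which already saves two units regardless of $k$. That class is produced uniformly: any $\textbf{a}\in\atoms{\sigma}$ that is not the bare segment $\unn{m}\,\ovv{M}$ or $\ovv{M}\,\unn{m}$ contains the factor $(m{+}1)\,m\,(m{+}1)(m{+}2)(m{+}3)(m{+}4)$ once $M-m\ge 4$; a single braid move turns this into $m\,(m{+}1)\,m\,(m{+}2)(m{+}3)(m{+}4)$, after which the trailing $m$ commutes past each of $(m{+}2),(m{+}3),(m{+}4)$, giving four words in the same class. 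The residual case $M-m<4$ is reduced by a shift to $\sym{k}$ with $k\le 5$ and checked directly. This is both simpler and stronger than what you sketched: it needs no case split on $k$, no parity considerations, and no inventory of extremal permutations.
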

\endgroup
This paper is organized as follows. Section~2 reviews the terminology from \cite{Tenner,OEC} relevant to one-element commutation classes, along with auxiliary results. In Section~3, we prove Theorem~\ref{thm1}. The key idea is to partition the set of permutations into two disjoint subsets according to a property of their one-element commutation classes, and then apply an inductive argument. Using the upper bound for $|\atoms{\sigma}|$, Section~4 establishes Theorem~\ref{thm2}. Finally, Section~5 presents some open problems and possible directions for future research.

\section{Preliminaries}
Given a set $S$, denote by $S^*$ the free monoid generated by $S$ consisting of all finite words with letters in $S$, with concatenation as multiplication and the empty word as identity element. For our purposes, $S$ will be either the set $[n]$ or $[i,j]:=\{i,i+1,\ldots j\}$, with $i\leq j$ positive integers. By convention, we consider $[i,j]$ to be the empty set if $i>j$. A \textit{subword} of a word $\textbf{a}=i_1i_2\ldots i_l$ is a word obtained from $\textbf{a}$ by deleting some of its letters, while a \textit{factor} is a subword formed by consecutive letters in $\textbf{a}$. \par 

For a reduced word to be a one-element commutation class, some properties must be satisfied. A basic requirement follows from the fact that such words contain no available commutations. Since we can commute a pair of letters in a reduced word of a permutation whenever there is a factor $ij$ with $|i-j|>1$, we have that $\atoms{\sigma}$ is the subset of $R(\sigma)$ formed by the reduced words where every factor of length 2 is of the form $i(i+1)$ or $(i+1)i$. A word with this property is called a \textit{word formed by consecutive integers}. For instance, the word $\textbf{a}=2345432123456765434$ is one such example. Additionally, one-element commutation classes must satisfy some technical properties, which will be important throughout the paper.

\begin{definition}[\cite{OEC,Tenner}]
	Let $\textbf{a} \in [n]^*$ be a word. The endpoints of $\textbf{a}$ are its leftmost and
	rightmost letters. A \emph{pinnacle} of $\textbf{a}$ is a letter that is larger than its immediate neighbor(s),
	and a \emph{vale} is a letter that is smaller than its immediate neighbor(s). We call pinnacles
	and vales the \textit{spikes} of \textbf{a}.
	Write $\p{\textbf{a}}$ for the
	subword of pinnacles of $\textbf{a}$, and $\vv{\textbf{a}}$ for the subword of vales. The subword of pinnacles and vales will be written as $\pv{\textbf{a}}$. 
\end{definition}
For the word $\textbf{a}=\green{2}34\red{5}432\green{1}23456\red{7}654\green{3}\red{4}$ (with vales in green and pinnacles in red), we have $\vv{\textbf{a}}=\green{213}$, $\p{\textbf{a}}=\red{574}$ and $\pv{\textbf{a}}=\green{2}\red{5}\green{1}\red{7}\green{3}\red{4}$. Note that the endpoints are always considered spikes of a word. \par 
A visual interpretation of words formed by consecutive integers can be given as follows. 
\begin{definition}[\cite{OEC}]
	Let $\textbf{a} \in [n]^*$ be a word formed by consecutive integers with $\pv{\textbf{a}}=i_1\cdots i_r$. The \emph{line diagram} of $\textbf{a}$ is formed by the set of points $(j,i_j) \in [r]\times[n]$, where there is a line segment connecting each pair $(j,i_j)$ and $(j+1,i_{j+1})$, for all $j \in [r-1]$.  
\end{definition} 
The line diagram of the word above is represented in Figure \ref{fig:ex_linediag}. Notice that each factor $ij$ of $\pv{\textbf{a}}$ corresponds in $\textbf{a}$ to the factor $i(i+1)\cdots (j-1)j$ if $i<j$, or $i(i-1)\cdots (j+1)j$ if $i>j$. These factors are the \textit{segments} of $\textbf{a}$, and they are encoded by the line segments of its line diagram. To simplify their writing, it will be useful to write only the endpoints of the segments, and for that we use the notation $\unn{i}$ and $\ovv{j}$ to denote a vale $i$ or to denote a pinnacle $j$, respectively. For our running example, its segments are $\unn{2}\ovv{5}$, $\ovv{5}\unn{1}$, $\unn{1}\ovv{7}$, $\ovv{7}\unn{3}$ and $\unn{3}\ovv{4}$. Notice that two consecutive segments always share a spike. In this sense, we write $\unn{i}\,\,\ovv{j}\,\unn{k}$ (resp. $\ovv{i}\,\unn{j}\,\ovv{k}$), to represent the word $\green{i}(i+1)\cdots (j-1)\red{j}(j-1)\cdots (k+1)\green{k}$ (resp. $\red{i}(i-1)\cdots (j+1)\green{j}(j+1)\cdots (k-1)\red{k}$).\par 
\begin{figure}[h]
	\centering
	\includegraphics[scale=0.7]{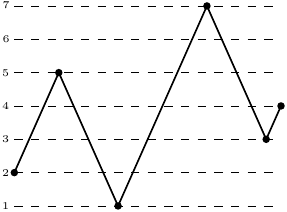}
	\caption{Line diagram of the word $2345432123456765434=\unn{2}\,\ovv{5}\,\unn{1}\,\ovv{7}\,\unn{3}\,\ovv{4}$.}
	\label{fig:ex_linediag}
\end{figure}
It turns out that $\p{\textbf{a}}$, $\vv{\textbf{a}}$ and $\pv{\textbf{a}}$ must satisfy some properties in order to encode a one-element commutation class.


\begin{definition}[\cite{Tenner}]
	Let $\textbf{a}=i_1\cdots i_l \in [n]^*$ be a word. If there exist $j$ and $k$ such that
	$1 \leq j \leq k \leq  l$ and
	$$i_1 < \cdots < i_j = i_k > \cdots > i_l,$$
	then $\textbf{a}$ is a \emph{wedge}. If
	$$i_1 > \cdots > i_j = i_k < \cdots < i_l,$$ then $\textbf{a}$ is a \emph{vee}. If $j = k$, then that wedge or vee is strict.   \label{def_vee}
\end{definition}

Then, we have following.

\begin{theorem}[\cite{Tenner}]
	For any $\sigma \in \sym{n+1}$, if $\textbf{a} \in \atoms{\sigma}$ then:
	\begin{enumerate}
		\item $\p{\textbf{a}}$ is a wedge,
		\item $\vv{\textbf{a}}$ is a vee,
		\item $\p{\textbf{a}}$ and/or $\vv{\textbf{a}}$ is strict,
		\item the minimum and maximum values of $\pv{\textbf{a}}$ appear consecutively and,
		\item if $\p{\textbf{a}}$ (or $\vv{\textbf{a}}$) has more than one occurrence of an integer $i$, then one of those $i$'s is an endpoint of $\textbf{a}$. 
	\end{enumerate}
	\label{TheoremTenner}
\end{theorem}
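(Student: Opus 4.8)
The plan is to argue inside the wiring (strand) diagram of $\textbf{a}$: the $n+1$ strands start in positions $1,\dots,n+1$, the letter $i$ in position $t$ of $\textbf{a}$ crosses the two strands then occupying positions $i$ and $i+1$, and $\textbf{a}$ is reduced exactly when no two strands cross more than once. Each of the five statements is proved by showing that its negation forces two strands to cross twice. Three operations preserve both ``reduced'' and ``formed by consecutive integers'' and serve to cut down cases: passing to a factor of $\textbf{a}$ (every factor of a reduced word is reduced); the reversal $\textbf{a}\mapsto\textbf{a}^{R}$, which reverses $\pv{\textbf{a}}$ and swaps the two endpoints; and the value complement $i\mapsto n+1-i$, which turns $\p{\textbf{a}}$ into $\vv{\textbf{a}}$ and conversely, hence interchanges statements~1 and~2. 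Since $\textbf{a}\in\atoms{\sigma}$ it is a reduced word formed by consecutive integers, so it is determined by the alternating spike sequence $\pv{\textbf{a}}$, and the factor of $\textbf{a}$ between two consecutive spikes is the run of consecutive integers joining their values (a segment).

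For statements~1 and~2, first record the combinatorial fact that a word is a wedge if and only if it has no factor of length $3$ of the form $a>b\le c$ or $a=b<c$, and dually it is a vee if and only if it has no factor $a<b\ge c$ or $a=b>c$ --- informally, once a sequence stops strictly decreasing it never strictly decreases again. By the value complement it suffices to prove statement~2, so suppose $\vv{\textbf{a}}$ contains a forbidden length-$3$ factor; in $\textbf{a}$ this is a factor $\unn a\,\ovv p\,\unn b\,\ovv q\,\unn c$ with $p,q>b$ and either $a<b\ge c$ or $a=b>c$. The main case is $a<b$ and $c<b$, handled by tracking strands: in the segment $\unn a\,\ovv p$ the strand entering from position $a$ rises to position $p+1$, crossing at height $b$ the strand $Y$ that sat in position $b+1$; the segment $\ovv p\,\unn b$ leaves position $p+1$ untouched, so that strand is still there; finally, in the segment $\unn b\,\ovv q$ the strand $Y$ rises and, if $q\ge p$, reaches position $p$, where the crossing at height $p$ of that segment crosses it with the first strand a second time. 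If $q<p$ the same conclusion comes from $\textbf{a}^{R}$, whose corresponding factor has $p$ and $q$ in the opposite order. The equality cases ($c=b$, or the pattern $a=b>c$) go the same way, using in addition that the repeated spike creates a palindromic block $\ovv p\,\unn v\,\ovv p$ whose immediate surroundings provide the re-crossing.

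For statements~3--5 use the wedge/vee shape just obtained, and write $M=\max\pv{\textbf{a}}$, $m=\min\pv{\textbf{a}}$, which by statements~1 and~2 are the plateau values of the wedge $\p{\textbf{a}}$ and of the vee $\vv{\textbf{a}}$. For statement~5: a repeated letter of $\p{\textbf{a}}$ must equal $M$ (statement~1), and every occurrence of $M$ in $\textbf{a}$ sits at an $M$-pinnacle, so ``$M$ repeated'' means the $M$-plateau has length $\ge 2$; if moreover neither of two consecutive $M$-pinnacles is an endpoint of $\textbf{a}$, then $\textbf{a}$ contains a factor $\ovv M\,\unn{v_{1}}\,\ovv M\,\unn{v_{2}}\,\ovv M$ (plateau of length $\ge 3$) or $\ovv{p'}\,\unn{v'}\,\ovv M\,\unn v\,\ovv M\,\unn{v''}\,\ovv{p''}$ with $p',p''<M$ (plateau of length $2$), and a short strand argument shows each forces a double crossing. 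For statement~4: if no $M$ is adjacent to an $m$ in $\pv{\textbf{a}}$, let $\textbf{c}$ be the factor of $\textbf{a}$ from the leftmost to the rightmost occurrence of the letter $M$; all its pinnacles equal $M$ and, by assumption, no vale of $\textbf{c}$ equals $m$, so $m$ occurs in $\textbf{a}$ only outside $\textbf{c}$, say to its left (use $\textbf{a}^{R}$ otherwise). Then $\textbf{a}$ has a factor in which the global minimum $m$ is attained, the profile climbs from it through pinnacles of strictly increasing height to the first $M$, and the strand carried to the top there must, on its way back down inside $\textbf{c}$, re-cross a strand it crossed while climbing. Finally, statement~3 follows by combining statements~4 and~5: were neither $\p{\textbf{a}}$ nor $\vv{\textbf{a}}$ strict, statement~5 would force both the $M$-plateau and the $m$-plateau to reach an endpoint of $\textbf{a}$, and as $\textbf{a}$ has only two endpoints we may assume (after $\textbf{a}^{R}$) that $\textbf{a}$ begins at an $M$-pinnacle and ends at an $m$-vale; comparing where the vee $\vv{\textbf{a}}$ reaches its plateau with the extent of the $M$-plateau then exhibits in $\textbf{a}$ one of a short list of factors, such as $\ovv M\,\unn m\,\ovv M\,\unn m\,\ovv M$ or $\ovv M\,\unn m\,\ovv p\,\unn m$ with $p<M$, each of which is non-reduced --- a contradiction.

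The main obstacle I expect lies in statements~3, 4 and~5 rather than in~1 and~2: the failure of~1 or~2 localizes to a factor of bounded length, whereas the failures of~3--5 need not localize, so one must follow a single strand across a long stretch of $\textbf{a}$, keep track of which other strands it has already crossed, and verify that the eventual re-crossing partner has not been moved away in the interim; the endpoint exceptions in statement~5 (hence in~3) add further cases. The reversal and value-complement symmetries roughly halve the casework, but making this global strand bookkeeping airtight is the real work.
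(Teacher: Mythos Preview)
This theorem is not proved in the paper at all: it is quoted verbatim from Tenner's work (the citation \texttt{[Tenner]}) and used as a black box, alongside the authors' own prior characterization (Theorem~\ref{TheoremEquiv2}, from \texttt{[OEC]}) in terms of repeated and symmetric segments. So there is no ``paper's own proof'' to compare against.

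Judged on its merits, your plan is the natural one and is essentially the route Tenner takes: interpret $\textbf{a}$ as a wiring diagram, use that reducedness is equivalent to ``no two strands cross twice,'' and show each failure of (1)--(5) produces a double crossing. Your use of the reversal and value-complement symmetries to halve the casework is exactly right, and your localization of (1)--(2) to a bounded factor is sound. Where your sketch is thinnest is where you already flag it: in (3)--(5) the phrases ``a short strand argument shows'' and ``one of a short list of factors, each of which is non-reduced'' hide the real work. In particular, for (4) your description of tracking a strand ``through pinnacles of strictly increasing height to the first $M$'' and then ``on its way back down inside $\textbf{c}$'' needs a precise identification of the partner strand and a verification that it has not been displaced in the interim --- this is exactly the bookkeeping you warn about, and it is not yet done. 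For (3), the final case split (e.g.\ the factor $\ovv M\,\unn m\,\ovv p\,\unn m$ with $p<M$) is correct in spirit, but you should either exhibit the double crossing explicitly or, more economically, observe that such factors are instances of repeated or symmetric segments in the sense of Definitions~\ref{def:repseg}--\ref{symseg} and invoke Theorem~\ref{TheoremEquiv2}. That shortcut would close most of your remaining gaps at once.
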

The previous result provides a set of necessary conditions for a word formed by consecutive integers to be reduced. However, these conditions are not sufficient to fully characterize such words, as there are words formed by consecutive integers satisfying all conditions of Theorem \ref{TheoremTenner} which are not reduced. The full characterization was obtained in \cite{OEC}, and it corresponds to the words formed by consecutive integers that ``avoid" some factors.

\begin{definition}[\cite{OEC}]
	A word \textbf{a} formed by consecutive integers in $[n]^*$ is said to have a factor with \textit{repeated segments} if it contains a factor
	$\tupp{i}{j}\,\textbf{b}\, \tupp{i}{j}$ or $\tdownn{j}{i}\, \textbf{b}\, \tdownn{j}{i}$, for some $i<j$ and $\textbf{b}\in[n]^*$. \label{def:repseg}
\end{definition}
In other words, it contains a factor with at least two occurrences of the same segment. For instance, the word $\textbf{a}=234543212345676543456=\unn{2}\,\ovv{5}\,\unn{1}\,\ovv{7}\,\unn{3}\,\ovv{6}$ contains a factor with repeated segments: the factor $3454321234567654345=\unn{3}\,\ovv{5}\,\unn{1}\,\ovv{7}\,\unn{3}\,\ovv{5}$. Figure \ref{rep_segs} depicts the line diagram of $\textbf{a}$, with the line diagram of $\unn{3}\,\ovv{5}\,\unn{1}\,\ovv{7}\,\unn{3}\,\ovv{5}$ highlighted in red. This factor contains two occurrences of the segment $\tupp{3}{5}$. \par 
\begin{figure}[h]
	\centering
	\includegraphics[scale=0.7]{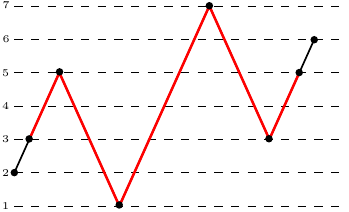}
	\caption{Line diagram of the word $\unn{2}\,\ovv{5}\,\unn{1}\,\ovv{7}\,\unn{3}\,\ovv{6}$ with a factor with repeated segments highlighted in red.}
	\label{rep_segs}
\end{figure}
The previous property can also be checked directly from the line diagram; a word $\textbf{a}$ does not contain factors with repeated segments if and only if its line diagram avoids the shapes depicted in Figure \ref{avoid} (see \cite{OEC}). For instance, the line diagram depicted in Figure \ref{fig:ex_linediag} encodes a word that does not contain factors with repeated segments, since it avoids those shapes
\begin{figure}[h]
	\centering
	\includegraphics[scale=1]{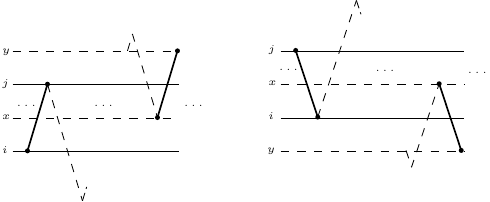}
	\caption{Possible line diagrams of a word containing a factor with repeated segments. On the left (resp. right) line diagram, $x$ satisfies $i\leq x<j\leq y$ (resp. $y\leq i<x\leq j$).}
	\label{avoid}
\end{figure}

\begin{definition}[\cite{OEC}]
	A word \textbf{a} formed by consecutive integers in $[n]^*$ is said to have a factor with \textit{symmetric} segments if it has a factor 
	$\tupp{i}{j}\,\textbf{b}\, \tdownn{j}{i}$ or $\tdownn{j}{i}\, \textbf{b}\, \tupp{i}{j}$, for some $i<j$ where $\textbf{b}\in[n]^*$ contains at least one spike of $\textbf{a}$.  \label{symseg}
\end{definition}
The word $3212343212=\ovv{3}\unn{1}\ovv{4}\unn{1}\ovv{2}$ contains a factor with symmetric segments, namely the factor $212343212=\ovv{2}\unn{1}\ovv{4}\unn{1}\ovv{2}$ (see Figure \ref{sym_segs}). A word that does not contain any factor with symmetric segments is equivalent to a word satisfying condition 5 of Theorem \ref{TheoremTenner} (see \cite{OEC}). Notice that the previous word contains two vales $1$, neither of which is an endpoint. \par 
\begin{figure}[h]
	\centering
	\includegraphics[scale=0.7]{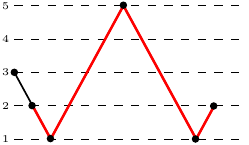}
	\caption{Line diagram of the word $\ovv{3}\unn{1}\ovv{4}\unn{1}\ovv{2}$ with a factor with symmetric segments highlighted in red.}
	\label{sym_segs}
\end{figure}
The characterization of words formed by consecutive integers that are reduced can be given in the following way.
\begin{theorem}[\cite{OEC}]
	Let $\textbf{a}\in[n]^*$ be a word formed by consecutive integers and let $\sigma\in \sym{n+1}$ be the corresponding permutation. Then, $\textbf{a} \in \atoms{\sigma}$ if and only if there is no factor of $\textbf{a}$ with repeated or symmetric segments.  
	\label{TheoremEquiv2}
\end{theorem}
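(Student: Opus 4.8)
The plan is first to reduce the statement to a ``reducedness'' question and then to attack it with the wiring-diagram picture. If $\textbf{a}$ is formed by consecutive integers it has no factor $ij$ with $|i-j|>1$, so no commutation can be applied to it and $[\textbf{a}]=\{\textbf{a}\}$ as soon as $\textbf{a}\in R(\sigma)$; hence $\textbf{a}\in\atoms{\sigma}$ if and only if $\textbf{a}$ is reduced, and it remains to show that a word formed by consecutive integers is reduced precisely when it contains no factor with repeated segments and no factor with symmetric segments.

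The tool for detecting reducedness is standard: reading $\textbf{a}=a_1\cdots a_l$ from left to right and setting $\sigma_0=\mathrm{id}$, $\sigma_k=\sigma_{k-1}s_{a_k}$, the word $\textbf{a}$ is reduced exactly when each step raises the length, i.e. when for every $k$ the value in position $a_k$ of $\sigma_{k-1}$ is smaller than the value in position $a_k+1$; equivalently, in the wiring diagram on $n+1$ strands no two strands cross twice. The structural feature to exploit is that such a word is a concatenation of maximal monotone runs -- essentially its segments -- and each run acts on the current permutation by cyclically shifting a contiguous block of positions: an ascending run moves the value at the bottom of its block to the top, pushing the rest of the block down, and all its letters raise the length exactly when that value is the minimum of the block (which then sits at the top); a descending run does the dual, with ``minimum'' replaced by ``maximum'' and the two ends of the block exchanged. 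These rigid rules make it possible to track, as one moves across the spikes of $\pv{\textbf{a}}$, which strand is currently ``active'' and where the relevant extremal value sits.

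\textbf{Necessity of the condition.} Since a factor of a reduced word is reduced, it suffices to show that each forbidden factor is itself non-reduced by producing a length-lowering letter. For a factor $\tupp{i}{j}\,\textbf{b}\,\tupp{i}{j}$ (and symmetrically for $\tdownn{j}{i}\,\textbf{b}\,\tdownn{j}{i}$), a good first copy of $\tupp{i}{j}$ pushes the minimum of its block to the top of that block; using that the whole factor is formed by consecutive integers, one verifies that at the start of the second copy the bottom of this block still does not hold the block minimum, so by the rigid rule the second copy contains a length-lowering letter. For a factor $\tupp{i}{j}\,\textbf{b}\,\tdownn{j}{i}$ or $\tdownn{j}{i}\,\textbf{b}\,\tupp{i}{j}$ with $\textbf{b}$ containing a spike of $\textbf{a}$, the same bookkeeping shows that the spike forces the strand the first (ascending) segment carried to the top of the block back into the block, so the second (descending) segment re-crosses it; the spike hypothesis is exactly what makes this return unavoidable rather than the two segments merely abutting.

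\textbf{Sufficiency of the condition.} Conversely, assume $\textbf{a}$ is not reduced and let $a_k$ be its first length-lowering letter, so all earlier letters raise the length and $a_k$ lies in some segment $S$. Following the active strand of $S$ and the extremal value it collides with backwards through the preceding (good) runs, the rigidity of the segment rules pins down an earlier segment $S'$ that is either the same string as $S$, giving a factor with repeated segments, or the mirror image of $S$, in which case -- since $\textbf{a}$ is formed by consecutive integers -- the part of $\textbf{a}$ strictly between $S'$ and $S$ necessarily contains a spike, giving a factor with symmetric segments. Either way the hypothesis is violated, so $\textbf{a}$ is reduced. We expect this sufficiency direction to be the main obstacle: turning the diagrammatic fact ``two strands cross twice'' into the combinatorial statement ``there is a repeated or symmetric factor'' requires carefully organising how the ``active'' strand is handed over at each spike, together with a somewhat delicate case analysis to decide, from the way the bad crossing arises, which of the two forbidden patterns appears -- in effect, reading the forbidden shapes of Figure~\ref{avoid} (and their symmetric analogue) off a minimal double crossing in the wiring diagram.
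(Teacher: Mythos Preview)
This theorem is not proved in the present paper: it is quoted from \cite{OEC} and used as a black box in Sections~2--3. There is therefore no proof here to compare your proposal against.

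On the proposal itself: your opening reduction---a word formed by consecutive integers lies in $\atoms{\sigma}$ iff it is reduced---is correct and isolates the real content. The wiring-diagram plan for ``reduced $\Leftrightarrow$ no factor with repeated or symmetric segments'' is the natural one. What you have written, however, is a plan rather than a proof, and the gaps are substantive rather than cosmetic. In the necessity direction you assert that after a good first $\tupp{i}{j}$ and an intervening $\textbf{b}$ the second $\tupp{i}{j}$ must contain a length-lowering letter, but $\textbf{b}$ can be long and can leave and re-enter the height range $[i,j]$ many times; an explicit invariant (e.g.\ which strand currently occupies position $i$, and why it is not the minimum of the relevant block) has to be stated and maintained across every spike of $\textbf{b}$, and you have not done this. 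In the sufficiency direction you explicitly flag the difficulty yourself: you have not argued why back-tracing from the first length-lowering letter must land on an earlier segment that is \emph{exactly equal to or exactly the reverse of} the current one (rather than merely overlapping it), nor why in the reversed case a spike of $\textbf{a}$ must lie strictly between the two copies---precisely the extra hypothesis in Definition~\ref{symseg}. Those two points are the heart of the theorem, and your outline does not yet supply them.
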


Theorems \ref{TheoremTenner} and \ref{TheoremEquiv2} will be important to prove the main result of this paper.  We end this section with some more auxiliary results.
\begin{lemma}[\cite{TennerRep}]
	Fix a permutation $\sigma \in \sym{n+1}$. The following statements are equivalent:
	\begin{enumerate}
		\item There is a word $\textbf{a}\in R(\sigma)$ containing a letter $i$. 
		\item Every word $\textbf{a}\in R(\sigma)$ contains a letter $i$.
		\item $\{\sigma(1),\ldots,\sigma(i)\}\neq \{1,\ldots,i\}$.
		\item $\{\sigma(i+1),\ldots,\sigma(n+1)\}\neq \{i+1,\ldots,n+1\}$.  
	\end{enumerate}
\label{lemmaTenner}
\end{lemma}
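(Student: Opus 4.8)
The plan is to establish the chain of equivalences $(4)\Leftrightarrow(3)\Leftrightarrow(2)\Leftrightarrow(1)$. The equivalence $(3)\Leftrightarrow(4)$ is purely set-theoretic: since $\sigma$ is a bijection of $[n+1]$, the set $\{\sigma(i+1),\dots,\sigma(n+1)\}$ is the complement in $[n+1]$ of $\{\sigma(1),\dots,\sigma(i)\}$, while $\{i+1,\dots,n+1\}$ is the complement of $\{1,\dots,i\}$; as two subsets of $[n+1]$ coincide precisely when their complements do, the equalities appearing in $(3)$ and $(4)$ hold simultaneously, and hence so do their negations.

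For $(3)\Leftrightarrow(2)$ I would work with the parabolic subgroup $W=\langle s_j : j\in[n]\setminus\{i\}\rangle$ and use two elementary observations. First, each generator $s_j$ with $j\neq i$ stabilizes the block $\{1,\dots,i\}$ setwise --- permuting it internally when $j\le i-1$, and fixing it pointwise when $j\ge i+1$ --- so every element of $W$ stabilizes $\{1,\dots,i\}$; consequently, if some reduced word of $\sigma$ omits the letter $i$, then $\sigma\in W$ and therefore $\{\sigma(1),\dots,\sigma(i)\}=\{1,\dots,i\}$. Second, conversely, if $\{\sigma(1),\dots,\sigma(i)\}=\{1,\dots,i\}$, then $\sigma=\sigma_1\sigma_2$ where $\sigma_1$ is the restriction of $\sigma$ to $\{1,\dots,i\}$ (extended by the identity) and $\sigma_2$ its restriction to $\{i+1,\dots,n+1\}$; since no inversion pair of $\sigma$ has one coordinate in each block, $\ell(\sigma)=\ell(\sigma_1)+\ell(\sigma_2)$, so concatenating a reduced word of $\sigma_1$ (with letters in $[i-1]$) and a reduced word of $\sigma_2$ (with letters in $[i+1,n]$) gives a reduced word of $\sigma$ with no letter $i$. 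Together these show that ``$\{\sigma(1),\dots,\sigma(i)\}=\{1,\dots,i\}$'' is equivalent to ``some reduced word of $\sigma$ omits $i$'', which is exactly the negation of $(2)$; negating yields $(2)\Leftrightarrow(3)$.

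Finally, $(2)\Leftrightarrow(1)$ is immediate from Tits' theorem: any two reduced words of $\sigma$ are related by a sequence of commutations and braid relations, and neither kind of move changes the set of letters appearing in the word (both sides of $s_is_j=s_js_i$ use $\{s_i,s_j\}$, and both sides of $s_is_{i+1}s_i=s_{i+1}s_is_{i+1}$ use $\{s_i,s_{i+1}\}$). Hence all reduced words of $\sigma$ involve the same set of letters, and since $R(\sigma)\neq\emptyset$, one of them contains $i$ if and only if they all do. There is no serious obstacle here; the one place meriting a little care is the length additivity $\ell(\sigma)=\ell(\sigma_1)+\ell(\sigma_2)$ above, which is immediate from the formula for $\ell$ in terms of inversion pairs. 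If one prefers not to invoke Tits' theorem, the implication $(1)\Rightarrow(3)$ can be obtained directly: take a reduced word $s_{i_1}\cdots s_{i_l}$ of $\sigma$ with $i_k=i$ and $k$ maximal; the suffix $s_{i_{k+1}}\cdots s_{i_l}$ lies in $W$, hence stabilizes $\{1,\dots,i\}$, so $\sigma(\{1,\dots,i\})=(\pi s_i)(\{1,\dots,i\})$ with $\pi:=s_{i_1}\cdots s_{i_{k-1}}$; reducedness of $s_{i_1}\cdots s_{i_k}$ forces $\ell(\pi s_i)=\ell(\pi)+1$, i.e.\ $\pi(i)<\pi(i+1)$, and then the set $(\pi s_i)(\{1,\dots,i\})=\bigl(\pi(\{1,\dots,i\})\setminus\{\pi(i)\}\bigr)\cup\{\pi(i+1)\}$ cannot equal $\{1,\dots,i\}$, which gives $(3)$.
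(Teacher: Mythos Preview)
Your proof is correct. Note, however, that the paper does not actually prove this lemma: it is quoted from \cite{TennerRep} and used as a black box, so there is no ``paper's own proof'' to compare against.

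A small remark on your alternative argument for $(1)\Rightarrow(3)$: the final step---that $\bigl(\pi(\{1,\dots,i\})\setminus\{\pi(i)\}\bigr)\cup\{\pi(i+1)\}$ cannot equal $\{1,\dots,i\}$---deserves one more line. If it did equal $\{1,\dots,i\}$, then $\pi(i+1)\le i$, and since the $i$ values $\pi(1),\dots,\pi(i-1),\pi(i+1)$ already exhaust $\{1,\dots,i\}$, necessarily $\pi(i)\ge i+1$; but this contradicts $\pi(i)<\pi(i+1)$. With that clarification, both routes you give (via Tits' theorem and the direct one) are complete.
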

We also recall the classical definition of fixed point of a permutation.
\begin{definition}
	For $i\in [n+1]$ and $\sigma\in \sym{n+1}$, we say that $i$ is a \textit{fixed point} of $\sigma$ if $\sigma(i)=i$. Otherwise, if $\sigma(i)\neq i$ we call $i$ a \textit{non-fixed point}. 
\end{definition}
Every $\sigma \in \sym{n+1}$ that is not the identity permutation contains at least two non-fixed points. This fact is important because the minimum and maximum non-fixed points of a permutation will play an important role in the following section. For instance, if we have \(\sigma =(3,2,5,4,1) \in \sym{5}\), then $1$ and $5$ are, respectively, the minimum and maximum non-fixed points of $\sigma$. \par 
The proof of the following result can be found in \cite{OEC} and it is also a consequence of Lemma \ref{lemmaTenner}.
\begin{lemma}[\cite{OEC}]
	Let $\textbf{a} \in R(\sigma)$ a reduced word for some $\sigma \in \sym{n+1}$ and $i,j\in [n]$ with $i<j$. If $\textbf{a} \in [i,j]^*$, then $\sigma(k)=k$ for all $k \in [n+1]\setminus[i,j+1]$.  \label{lemma:fixedelts} 
\end{lemma}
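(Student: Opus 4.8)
The plan is to obtain the statement as a direct consequence of Lemma~\ref{lemmaTenner}. Since $\textbf{a}\in[i,j]^*$, the word $\textbf{a}$ contains no letter $k$ with $k\in[n]\setminus[i,j]=[1,i-1]\cup[j+1,n]$. Fix such a $k$ (playing the role of the lemma's index). Then statement~(1) of Lemma~\ref{lemmaTenner} fails for $\textbf{a}$, hence, by the equivalence (1)$\Leftrightarrow$(2), it fails for every reduced word of $\sigma$; consequently statements~(3) and~(4) must fail as well, i.e.
\[
\{\sigma(1),\ldots,\sigma(k)\}=\{1,\ldots,k\}\qquad\text{and}\qquad\{\sigma(k+1),\ldots,\sigma(n+1)\}=\{k+1,\ldots,n+1\}
\]
for every $k\in[1,i-1]\cup[j+1,n]$.

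Next I would extract the fixed points from these two families of set identities. For the lower range, comparing the first identity at the consecutive values $k$ and $k-1$ (for $2\le k\le i-1$) gives $\sigma(k)=k$, and $k=1$ gives $\sigma(1)=1$ directly; thus $\sigma(k)=k$ for all $k\in[1,i-1]$, vacuously if $i=1$. Symmetrically, for the upper range, comparing the second identity at $k$ and $k+1$ (for $j+1\le k\le n-1$) gives $\sigma(k+1)=k+1$, and $k=n$ gives $\sigma(n+1)=n+1$ directly; thus $\sigma(m)=m$ for all $m\in[j+2,n+1]$, vacuously if $j\ge n$. Combining the two ranges yields $\sigma(k)=k$ for every $k\in[1,i-1]\cup[j+2,n+1]=[n+1]\setminus[i,j+1]$, which is the claim.

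Since the argument is essentially a translation of the hypothesis through the equivalences of Lemma~\ref{lemmaTenner}, there is no serious conceptual obstacle. The only points requiring care are invoking the chain (1)$\Leftrightarrow$(2)$\Leftrightarrow$\{(3),(4)\} correctly, so that the absence of a letter from the \emph{single} word $\textbf{a}$ is upgraded to a structural property of $\sigma$, and the boundary bookkeeping at $i=1$, $j=n$, and $j=n-1$, where one of the two ranges degenerates and the corresponding half of the claim becomes vacuous. One could also avoid statement~(4) altogether and derive both halves from statement~(3) alone, at the cost of a slightly less symmetric argument.
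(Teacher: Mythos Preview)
Your proposal is correct and matches the paper's approach: the paper does not write out a proof but simply remarks that the lemma is a consequence of Lemma~\ref{lemmaTenner}, and your argument is precisely that derivation. One could shorten it further by noting that each $s_k$ with $k\in[i,j]$ fixes every element outside $[i,j+1]$, so their product does as well; but your route through the set equalities of Lemma~\ref{lemmaTenner} is exactly what the paper has in mind.
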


\begin{lemma}
	Let $\sigma \in \sym{n+1}$ not the identity permutation with $m$ and $M+1$ the minimum and maximum non-fixed points of $\sigma$, respectively, where $1\leq m\leq M\leq n$. If $\textbf{a}\in R(\sigma)$, then $\textbf{a}\in [m,M]^*$. Moreover, $\textbf{a}$ contains at least a letter $m$ and a letter $M$.  
\end{lemma}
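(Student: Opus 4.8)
The plan is to derive the whole statement from Lemma~\ref{lemmaTenner}, which describes exactly which letters can occur in the reduced words of $\sigma$: a letter $i\in[n]$ appears in some (equivalently, in every) word of $R(\sigma)$ if and only if $\{\sigma(1),\dots,\sigma(i)\}\neq\{1,\dots,i\}$, equivalently $\{\sigma(i+1),\dots,\sigma(n+1)\}\neq\{i+1,\dots,n+1\}$. So everything reduces to checking, for the relevant ranges of $i$, whether these (in)equalities hold.

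First I would prove the inclusion $\textbf{a}\in[m,M]^*$ by excluding all letters below $m$ and above $M$. If $i<m$, then minimality of the non-fixed point $m$ gives $\sigma(k)=k$ for every $k\le i$, so $\{\sigma(1),\dots,\sigma(i)\}=\{1,\dots,i\}$; by item~3 of Lemma~\ref{lemmaTenner} the letter $i$ occurs in no word of $R(\sigma)$. Symmetrically, if $i>M$ then $i+1\ge M+2$, so maximality of the non-fixed point $M+1$ gives $\sigma(k)=k$ for every $k\ge i+1$, hence $\{\sigma(i+1),\dots,\sigma(n+1)\}=\{i+1,\dots,n+1\}$; by item~4 the letter $i$ again never appears. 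Since any $\textbf{a}\in R(\sigma)$ is a word over $[n]$, this forces all its letters into $[m,M]$.

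Next I would show that the letters $m$ and $M$ really do occur. Here the only extra ingredient is bijectivity of $\sigma$. Because $\sigma$ fixes $1,\dots,m-1$, the value $\sigma(m)$ lies outside $\{1,\dots,m-1\}$, and it is not $m$ since $m$ is a non-fixed point; hence $\sigma(m)>m$, so $\{\sigma(1),\dots,\sigma(m)\}=\{1,\dots,m-1\}\cup\{\sigma(m)\}\neq\{1,\dots,m\}$, and item~3 yields a letter $m$ in every word of $R(\sigma)$. In the same way, $\sigma$ fixes $M+2,\dots,n+1$, so $\sigma(M+1)$ avoids $\{M+2,\dots,n+1\}$ and is not $M+1$, giving $\sigma(M+1)<M+1$ and $\{\sigma(M+1),\dots,\sigma(n+1)\}\neq\{M+1,\dots,n+1\}$; item~4 then produces a letter $M$ in every word of $R(\sigma)$.

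I do not expect a genuine obstacle: this is essentially a bookkeeping corollary of Lemma~\ref{lemmaTenner}. The only point that needs a moment's care is the bijectivity observation that the ``boundary'' images $\sigma(m)$ and $\sigma(M+1)$ must jump strictly outward past the surrounding block of fixed points, since that is precisely what makes the non-equality conditions of items~3 and~4 hold at $i=m$ and $i=M$.
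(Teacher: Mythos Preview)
Your proof is correct and follows essentially the same approach as the paper: both arguments use items~3 and~4 of Lemma~\ref{lemmaTenner} to exclude letters outside $[m,M]$ by observing that $\sigma$ fixes the appropriate initial and terminal segments, and then verify the non-equality conditions at $i=m$ and $i=M$ by noting that $\sigma(m)>m$ and $\sigma(M+1)<M+1$. Your explicit mention of bijectivity to rule out $\sigma(m)\in\{1,\dots,m-1\}$ is left implicit in the paper's argument, but otherwise the proofs coincide.
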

\begin{proof}
	From the hypothesis, $\{\sigma(1),\ldots,\sigma(i)\}=[i]$ and $\{\sigma(j),\ldots,\sigma(n+1)\}=[j,n+1]$ for all $i<m$ and $j>M+1$. Thus, $\textbf{a}$ cannot contain letters in the alphabet $[m-1]\cup [M+1,n+1]$ by conditions 3 and 4 of Lemma \ref{lemmaTenner}, implying that $\textbf{a}\in [m,M]^*$. \par 
	To prove that $\mathbf{a}$ must contain the letter $m$, note that the fact that $\{\sigma(1),\ldots,\sigma(m-1)\} = [m-1]$ and $\sigma(m) \neq m$ implies that $\sigma(m) = j > m$. Hence $\{\sigma(1),\ldots,\sigma(m)\} \neq [m]$, and by Lemma~\ref{lemmaTenner}, $\mathbf{a}$ must contain the letter $m$.
	Using a similar reasoning and the fact that $M+1$ is the maximum non-fixed point, one can show that $\textbf{a}$ must also contain a letter $M$.
\end{proof}
For consistency, we will denote by $m$ and $M+1$ the minimum and maximum non-fixed points of a permutation throughout, with $1 \leq m \leq M \leq n$. Notice that if $m = M$, then $\sigma = (m\; m+1)$, which has only one reduced word; the single letter $m$. Since this is a trivial case for the results that follow, we will also  assume $m < M$, unless otherwise stated.

\section{An upper bound for $|\atoms{\sigma}|$}
As we mentioned in the beginning, to prove Theorem \ref{thm1} we will first partition the set of permutations in two disjoints subsets depending on a property of their one-element commutation classes. For that, we need to uncover some structure appearing in these words. 

\begin{lemma}
 Let $\sigma \in \sym{n+1}$ with $m$ and $M+1$ the minimum and maximum non-fixed points of $\sigma$, respectively. If $|\atoms{\sigma}|>0$, then $\sigma(m)=M+1$ or $\sigma(M+1)=m$. Moreover, if $\textbf{a}\in \atoms{\sigma}$, then
 \begin{enumerate}
 	\item $\textbf{a}=\textbf{p}\,\tupp{m}{M}\,\textbf{q}$ for some $\textbf{p}\in [m+1,M]^*$ and $\textbf{q}\in [m,M-1]^*$ if and only if $\sigma(M+1)=m$.
 	\item$\textbf{a}=\textbf{p}\,\tdownn{M}{m}\,\textbf{q}$ for some $\textbf{p}\in [m,M-1]^*$ and $\textbf{q}\in [m+1,M]^*$ if and only if $\sigma(m)=M+1$.  
 \end{enumerate}
\label{lemma:endpoints}
\end{lemma}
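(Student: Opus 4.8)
The plan is to pin down the coarse shape of any $\textbf a\in\atoms{\sigma}$ and then read off the two permutation conditions by a direct computation with the corresponding subword permutations. By the previous lemma, $\textbf a\in[m,M]^*$ and $\textbf a$ contains both a letter $m$ and a letter $M$. Since $m$ is the smallest letter of $\textbf a$, every occurrence of $m$ is a vale or an endpoint, and likewise every occurrence of $M$ is a pinnacle or an endpoint; hence $m$ and $M$ are respectively the minimum and maximum values of $\pv{\textbf a}$. By Theorem~\ref{TheoremTenner}(4) these two values are consecutive in $\pv{\textbf a}$, so $\textbf a$ contains a factor joining an occurrence of $m$ to an occurrence of $M$, that is, a segment $\tupp{m}{M}$ or a segment $\tdownn{M}{m}$. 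I will carry out the argument for a $\tupp{m}{M}$-factor; the other case is obtained by passing to the reversed word $\textbf a^{\mathrm{rev}}\in\atoms{\sigma^{-1}}$, which has the same minimum and maximum non-fixed points and in which a $\tdownn{M}{m}$-factor becomes a $\tupp{m}{M}$-factor.

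The structural heart of the proof is the dichotomy: if $\textbf a$ has a $\tupp{m}{M}$-factor, then $\textbf a=\textbf p\,\tupp{m}{M}\,\textbf q$ with $\textbf p\in[m+1,M]^*$ and $\textbf q\in[m,M-1]^*$ (the first form), and symmetrically for a $\tdownn{M}{m}$-factor and the second form. I would establish this in three steps, all using Theorems~\ref{TheoremTenner} and~\ref{TheoremEquiv2}. First, $m$ occurs at most twice in $\textbf a$ (and likewise $M$): the occurrences of $m$ form the flat bottom of the vee $\vv{\textbf a}$, so between consecutive ones there is a single pinnacle, and three occurrences let one locate among these runs a factor $\tupp{m}{h}\,\textbf b\,\tupp{m}{h}$ (or $\tdownn{h}{m}\,\textbf b\,\tdownn{h}{m}$), contradicting Theorem~\ref{TheoremEquiv2}. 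Second, the $\tupp{m}{M}$-factor is unique, since two of them would analogously produce a factor $\tupp{m}{M}\,\textbf b\,\tupp{m}{M}$. Third, writing $\textbf a=\textbf p\,\tupp{m}{M}\,\textbf q$ for this unique factor: if $\textbf p$ contained an $m$, then the $m$ at the start of the central segment is an interior vale, so by Theorem~\ref{TheoremTenner}(5) the $m$ in $\textbf p$ must be the left endpoint of $\textbf a$; using the at-most-twice bound and the flat-bottom structure of $\vv{\textbf a}$, $\textbf a$ would then begin with $\tupp{m}{h}\,\tdownn{h}{m}\,\tupp{m}{M}$ for some $h$, whose prefix up to the first value $h$ on the final ascent is a factor $\tupp{m}{h}\,\textbf b\,\tupp{m}{h}$, again contradicting Theorem~\ref{TheoremEquiv2}; the mirror argument with pinnacles and the wedge $\p{\textbf a}$ shows $\textbf q$ contains no $M$.

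With the dichotomy in hand, the two ``iff''s split into an easy forward direction and a reverse direction that feeds back into the dichotomy. For the forward direction of statement~1, if $\textbf a=\textbf p\,\tupp{m}{M}\,\textbf q$ with $\textbf p\in[m+1,M]^*$ and $\textbf q\in[m,M-1]^*$, write $\sigma$ as the product of the subword permutations $\pi_{\textbf p}\,\pi_{\tupp{m}{M}}\,\pi_{\textbf q}$: here $\pi_{\textbf p}$ uses only the generators $s_{m+1},\dots,s_M$ and so fixes the point $m$, $\pi_{\textbf q}$ uses only $s_m,\dots,s_{M-1}$ and so fixes $M+1$, and $\pi_{\tupp{m}{M}}=s_m s_{m+1}\cdots s_M$ sends $M+1\mapsto m$; evaluating at $M+1$ gives $\sigma(M+1)=m$. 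The symmetric computation gives the forward direction of statement~2, and the first assertion of the lemma ($\sigma(m)=M+1$ or $\sigma(M+1)=m$) is then immediate from the dichotomy. For the reverse direction of statement~1, suppose $\sigma(M+1)=m$; by the dichotomy $\textbf a$ is in the first form (and we are done) or in the second form $\textbf a=\textbf p'\,\tdownn{M}{m}\,\textbf q'$, in which case I would show $\textbf a$ nonetheless contains a $\tupp{m}{M}$-factor. Expressing $\sigma(M+1)$ through $\pi_{\textbf p'}\,\pi_{\tdownn{M}{m}}\,\pi_{\textbf q'}$, the condition $\sigma(M+1)=m$ forces either $\pi_{\textbf q'}(M+1)=m+1$, or else $\textbf p'$ contains the letter $m$. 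In the first case one tracks the letter $m+1$ through the reduced word $\textbf q'$ (a suffix of $\textbf a$, hence reduced, whose first letter is forced to be $m+1$ since $\textbf q'$ follows the $m$ of $\tdownn{M}{m}$): its initial ascent must peak at $M$, so $\textbf q'$ begins with $m+1,m+2,\dots,M$ and $\textbf a$ has the factor $m,m+1,\dots,M$. In the second case, the at-most-twice bound forces $\textbf p'$ to contain exactly one $m$; this $m$ and the $m$ ending $\tdownn{M}{m}$ are the two occurrences of $m$ in $\textbf a$, consecutive at the bottom of $\vv{\textbf a}$, hence joined by an up-then-down through a single pinnacle, which must be $M$ because the descent to the second $m$ is precisely $\tdownn{M}{m}$; so $\textbf a$ contains $\tupp{m}{M}\,\tdownn{M}{m}$ and in particular a $\tupp{m}{M}$-factor. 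The reverse direction of statement~2 follows by reversal.

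The step I expect to be the main obstacle is the third step of the dichotomy together with the last case of the reverse direction: both require carefully extracting a forbidden factor with repeated segments out of the vee/wedge structure, and the bookkeeping of which generators appear in $\textbf p,\textbf q$ (hence which points they fix) must be done precisely so that the permutation evaluations come out clean.
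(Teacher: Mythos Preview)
Your argument is correct, and the obstacles you flag can indeed be overcome with the tools you cite, but the paper takes a much shorter path at both junctures. For the decomposition $\textbf a=\textbf p\,\tupp{m}{M}\,\textbf q$ with the required alphabets, the paper observes that the segment in question joins the \emph{leftmost} $m$ to the \emph{rightmost} $M$ of $\textbf a$ (reading condition~4 of Theorem~\ref{TheoremTenner} as saying that the occurrences of the extreme values form a single block in $\pv{\textbf a}$), which immediately gives $\textbf p\in[m+1,M]^*$ and $\textbf q\in[m,M-1]^*$ and bypasses your three-step repeated-segment argument entirely. For the converse, the paper simply tracks the \emph{element} $m$ through the one-line notation: if $\sigma(M+1)=m$, then reading $\textbf a$ left to right as a sequence of adjacent swaps, the first generator to move $m$ out of position $m$ must be an $s_m$ and the last one to deposit it into position $M+1$ must be an $s_M$, so some letter $m$ precedes some letter $M$ and the forward case applies --- this replaces your whole case split on whether $\textbf p'$ contains $m$. (Your Case~A assertion that the initial ascent of $\textbf q'$ peaks at $M$ is true but needs one more line: if the first pinnacle of $\textbf q'$ were below $M$, the wedge shape of $\p{\textbf a}$ would forbid any later $M$ in $\textbf q'$, and then $\pi_{\textbf q'}$ would fix $M+1$, contradicting $\pi_{\textbf q'}(M+1)=m+1$.) Your route has the virtue of making the dependence on Theorem~\ref{TheoremEquiv2} fully explicit; the paper's buys brevity by extracting the leftmost/rightmost information directly from Theorem~\ref{TheoremTenner} and by the one-line element-tracking trick for the converse.
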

\begin{proof}
Suppose that $\textbf{a}\in \atoms{\sigma}$, which exists by assumption. From the previous lemma, $\textbf{a}\in [m,M]^*$ and it contains at least a letter $m$ and a letter $M$. Clearly, those letters are the minimum and maximum values of $\textbf{a}$, and each letter $m$ (resp. $M$) is a vale (resp. pinnacle) of $\textbf{a}$. Now suppose there is an occurrence of a letter $m$ proceeded by a letter $M$ in $\textbf{a}$. From condition 4 of Theorem \ref{TheoremTenner}, since letters $m$ and $M$ must appear consecutively in $\pv{\textbf{a}}$, the leftmost $m$ and the rightmost $M$ of $\textbf{a}$ must form a segment $\tupp{m}{M}$. Thus, we can write $\textbf{a}$ as $$\textbf{a}=\textbf{p}\,\tupp{m}{M}\,\textbf{q},$$ for some words $\textbf{p}\in [m+1,M]^*$ and $\textbf{q}\in [m,M-1]^*$. Let $\sigma_\textbf{p}\in \sym{n+1}$ be the permutation associated to $\textbf{p}$. Then, the permutation associated to the segment $\tupp{m}{M}$ ``moves" the integer $\sigma_\textbf{p}(m)$ to position $M+1$ in the one-line notation, which will remain in that position since $\textbf{q}$ does not contain letters $M$. But since $\textbf{p}\in [m+1,M]^*$, from Lemma \ref{lemma:fixedelts} we have $\sigma_\textbf{p}(m)=m$, and so $\sigma(M+1)=\sigma_{\textbf{p}}(m)=m$. Conversely, if $\sigma(M+1)=m$ and $\textbf{a}\in \atoms{\sigma}$, then there must exist a letter $m$ that appears before a letter $M$ in $\textbf{a}$; one responsible to take the integer $m$ out of position $m$ in the one-line notation, and the other to put $m$ in position $M+1$. From condition 4 of Theorem \ref{TheoremTenner}, these letters must form a segment $\tupp{m}{M}$ in $\textbf{a}$. Moreover, these letters $m$ and $M$ must correspond to the leftmost and rightmost occurrences of $m$ and $M$ in $\textbf{a}$, respectively, also a consequence of condition 4 of Theorem \ref{TheoremTenner}. Thus, we have $\textbf{a}=\textbf{p}\,\tupp{m}{M}\,\textbf{q}$ for some $\textbf{p}\in [m+1,M]^*$ and $\textbf{q}\in [m,M-1]^*$.\par 
With similar arguments one proves that if there is a letter $M$ proceeded by a letter $m$ in $\textbf{a}$, then $\sigma(m)=M+1$ and condition 2 holds.
\end{proof}
It is worth mention that the previous result is a generalization of Lemma 21 of \cite{OEC}. 
\begin{example}
	Consider the permutation $\sigma= (9\ 2\ 10\ 1\ 8\ 3)(4\ 7)(5\ 6)\in \sym{10}$. The minimum and maximum non-fixed points of $\sigma$ are $1$ and $10$, respectively. Figure \ref{diagrams_sigma} depicts the line diagrams of the one-element commutation classes of $\sigma$. 
	\begin{figure}[h]
		\centering
		\includegraphics[scale=0.7]{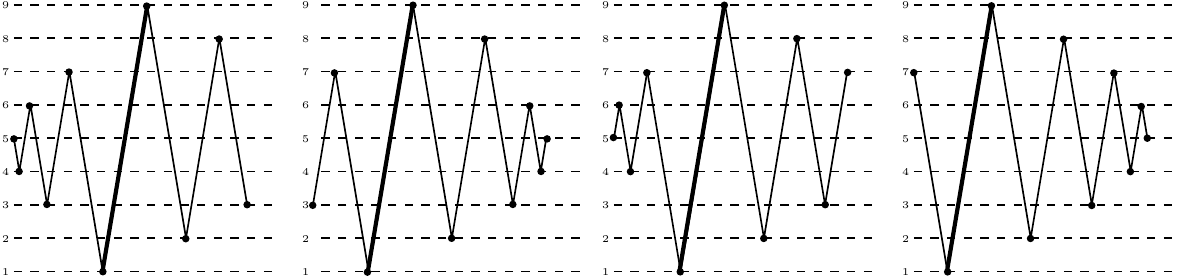}
		\caption{Line diagrams for the one-element commutation classes of $\sigma$.}
		\label{diagrams_sigma}
	\end{figure}
	From the previous lemma, the fact that $\sigma(1)=10$ implies that every one-element commutation class for $\sigma$ contains an occurrence of the segment $\unn{1}\ovv{9}$, which can be checked by looking at their line diagrams.  
	\label{ex:segs} 
\end{example}

\subsection{Oscillations}
 In this subsection we will prove Theorem \ref{thm1} for a certain subset of permutations. 
\begin{definition}
	Let $\textbf{a}\in [n]^*$ be a word formed by consecutive integers with $\pv{\textbf{a}}=i_1i_2\cdots i_r$. We say that $\textbf{a}$ is an \emph{oscillation} if the sequence $(|i_j-i_{j+1}|)_{j\in [r-1]}$ is monotone (in the broad sense).  
\end{definition}
Equivalently, a word $\textbf{a}$ is an oscillation if the lengths of the line segments in its line diagram form a sequence that is either weakly increasing or weakly decreasing, starting from its leftmost segment. Figure~\ref{osc} depicts the diagram of an oscillation. 
	\begin{figure}[h]
	\centering
	\includegraphics[scale=0.7]{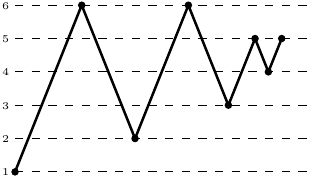}
	\caption{The line diagram of an oscillation.}
	\label{osc}
\end{figure}
In contrast, all line diagrams in Figure \ref{diagrams_sigma} represent words that are not oscillations. We are going to consider the empty word as an oscillation.  \par 

The reason we introduced this type of words  is that either every word in \(\atoms{\sigma}\) is an oscillation or none of them are, a property that will be very useful to obtain the upper bound for $|\atoms{\sigma}|$. To establish this, we will need an alternative characterization of oscillations.  \par

\begin{lemma}
	Let $\sigma \in \sym{n+1}$ with $m$ and $M+1$ the minimum and maximum non-fixed points of $\sigma$, respectively. Then, $\textbf{a}\in \atoms{\sigma}$ is an oscillation if and only if either $m$ or $M$ is an endpoint of $\textbf{a}$.  
	\label{endpoints}
\end{lemma}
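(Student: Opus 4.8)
The plan is to prove the two implications separately: the forward direction (``oscillation $\Rightarrow$ $m$ or $M$ an endpoint'') is elementary, and the converse I would prove by induction on $\l(\sigma)$. For the forward direction, write $\pv{\textbf{a}}=i_1\cdots i_r$ and let $\ell_j:=|i_j-i_{j+1}|$ be the segment lengths, so $(\ell_j)_{j\in[r-1]}$ is weakly monotone. Since $\textbf{a}\in[m,M]^*$ we have $\ell_j\le M-m$ for all $j$, while condition 4 of Theorem~\ref{TheoremTenner} forces $m$ and $M$ into consecutive positions of $\pv{\textbf{a}}$, so some $\ell_j$ equals $M-m$. If $(\ell_j)$ is weakly decreasing then $\ell_1=M-m$, hence $\{i_1,i_2\}=\{m,M\}$ and the leftmost letter of $\textbf{a}$ is $m$ or $M$; if $(\ell_j)$ is weakly increasing the same reasoning applied to $\ell_{r-1}$ places $m$ or $M$ at the right end.

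For the converse I induct on $\l(\sigma)$. Membership in $\atoms{\cdot}$, being an oscillation, and the set of endpoints are all preserved under reversing a word and under the complementation $j\mapsto n+1-j$ (which interchanges the smallest and largest letters of $\textbf{a}$, hence the roles of $m$ and $M$), so I may assume the left endpoint $i_1$ of $\textbf{a}$ equals $m$. The heart of the argument is to show that then $i_2=M$, i.e.\ $\textbf{a}$ opens with the segment $\tupp{m}{M}$. Every occurrence of $m$ in $\pv{\textbf{a}}$ is a vale (being the smallest letter, $m$ cannot be a pinnacle), and since $\vv{\textbf{a}}$ is a vee (condition 2 of Theorem~\ref{TheoremTenner}) whose first letter $i_1=m$ is its minimum, the vales of $\textbf{a}$ begin with a block of $m$'s, say $c$ of them. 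If $c=1$, then $i_1$ is the only copy of $m$ in $\pv{\textbf{a}}$, so condition 4 forces $i_2=M$. If $c\ge2$, then $i_3=m$, so --- unless $\textbf{a}$ has only two segments, in which case its unique pinnacle $i_2$ is already the largest letter $M$ --- the first three segments of $\textbf{a}$ are $\tupp{m}{i_2}$, $\tdownn{i_2}{m}$, $\tupp{m}{i_4}$; since the wedge $\p{\textbf{a}}$ has largest value $M$ (the letter $M$ occurs in $\textbf{a}$ and can only occur as a pinnacle), the assumption $i_2<M$ would force $i_2<i_4$, so the ascending run $\tupp{m}{i_4}$ passes through the value $i_2$, and then the factor of $\textbf{a}$ from its first letter to that inner copy of $i_2$ reads $\tupp{m}{i_2}\,\mathbf{b}\,\tupp{m}{i_2}$, a factor with repeated segments, contradicting Theorem~\ref{TheoremEquiv2}. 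Hence $i_2=M$.

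Granting this, write $\textbf{a}=\tupp{m}{M}\,\mathbf{q}$ and pass to the factor $\textbf{a}':=M\mathbf{q}$ obtained by deleting the ascending prefix $m(m+1)\cdots(M-1)$ (if $\mathbf{q}$ is empty, $\textbf{a}$ is a single segment and we are done). As a factor of $\textbf{a}$, $\textbf{a}'$ contains no factor with repeated or symmetric segments, so by Theorem~\ref{TheoremEquiv2} it lies in $\atoms{\sigma'}$ for the corresponding $\sigma'$, with $\l(\sigma')=\l(\sigma)-(M-m)<\l(\sigma)$; its leftmost letter $M$ is its largest letter and hence plays the role of ``$M$'' for $\sigma'$ (by the last lemma of Section~2). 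By the induction hypothesis $\textbf{a}'$ is an oscillation, and its segment lengths are exactly $\ell_2,\dots,\ell_{r-1}$. If these are weakly decreasing then, as $\ell_1=M-m$ is the largest possible segment length, $\ell_1\ge\ell_2\ge\cdots\ge\ell_{r-1}$ and $\textbf{a}$ is an oscillation. If instead they were weakly increasing but nonconstant, the last segment of $\textbf{a}'$ would attain the maximal length $M-m'$, where $m'$ is its smallest letter (condition 4 for $\textbf{a}'$), forcing $\{i_{r-1},i_r\}=\{m',M\}$; combined with $i_2=M$ and the wedge condition (condition 1 of Theorem~\ref{TheoremTenner}) this forces every pinnacle of $\textbf{a}$ to equal $M$, and then the factor of $\textbf{a}$ from the inner copy of $i_3$ in the first segment to the end of the third segment has the form $\tupp{i_3}{M}\,\mathbf{c}\,\tupp{i_3}{M}$, again a repeated-segment factor --- a contradiction. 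Hence the segment lengths of $\textbf{a}'$ are weakly decreasing and $\textbf{a}$ is an oscillation.

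The step I expect to be the main obstacle is the claim $i_2=M$, together with the parallel argument closing the induction: both require extracting from Tenner's wedge/vee conditions the precise local picture --- two identical maximal ascending runs separated by a nonempty stretch --- that certifies a forbidden repeated-segment factor in the sense of Theorem~\ref{TheoremEquiv2}, and one must be careful that the runs in question really are \emph{maximal} in the factor exhibited. The forward direction and the inductive reduction to $\textbf{a}'$ should be routine by comparison.
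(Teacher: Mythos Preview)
Your proof is correct, but it takes a noticeably longer route for the converse than the paper does.

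For the forward direction your argument is essentially the paper's: you locate the segment of maximal length $M-m$ via condition~4 of Theorem~\ref{TheoremTenner} and then use monotonicity of the $\ell_j$'s to push it to one end. The paper phrases this slightly differently (it assumes the long segment is interior and shows the neighbouring segments would have to be equally long, producing a non-reduced factor), but the content is the same.

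For the converse the two arguments diverge. You first spend effort establishing $i_2=M$ (your case analysis on $c$ together with the repeated-segment contradiction), and then run an induction on $\ell(\sigma)$, passing to $\textbf{a}'=M\textbf{q}$ and doing a further case analysis (weakly decreasing versus weakly increasing nonconstant) to glue the inductive conclusion back onto $\textbf{a}$. All of this is valid. The paper, however, avoids the induction entirely: it simply invokes Lemma~\ref{lemma:endpoints} to get $\textbf{a}=\tupp{m}{M}\,\textbf{q}$ with $\textbf{q}\in[m,M-1]^*$ in one line (your claim ``$i_2=M$'' is exactly this decomposition, so you are re-deriving part of that lemma), and then observes that the first vale $m$ is the global minimum and the first pinnacle $M$ is the global maximum; the vee and wedge conditions (Theorem~\ref{TheoremTenner}, items 1--2) then force $\vv{\textbf{a}}$ to be weakly increasing and $\p{\textbf{a}}$ to be decreasing, and from that the segment lengths $p_k-v_k$ and $p_k-v_{k+1}$ are immediately weakly decreasing. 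No induction, no second repeated-segment argument.

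So the comparison is: your approach is self-contained (you never appeal to Lemma~\ref{lemma:endpoints}) and shows that the converse can be bootstrapped from Theorems~\ref{TheoremTenner} and~\ref{TheoremEquiv2} alone; the paper's approach buys a two-line proof by cashing in the structure already established in Lemma~\ref{lemma:endpoints}. If you are writing this up, the paper's route is preferable --- but it is worth noting that your ``$i_2=M$'' argument is really a direct proof of (a special case of) Lemma~\ref{lemma:endpoints}, which you could extract as a separate observation.
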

\begin{proof}
	Suppose that \textbf{a} is an oscillation and let $\pv{\textbf{a}}=i_1i_2\cdots i_r$. From Lemma \ref{lemma:endpoints}, $\textbf{a}$ contains a segment $\unn{m}\,\ovv{M}$ or a segment $\ovv{M}\,\unn{m}$. Suppose that the later case holds (the other case is analogous). If $\ovv{M}\,\unn{m}$ does not contain any endpoint of $\textbf{a}$, then $\textbf{a}=\textbf{p}\,\unn{i}\,\tdownn{M}{m}\,\ovv{j}\,\textbf{q}$ for some words $\textbf{p}$ and $\textbf{q}$, and $i,j\in [m,M]$. Since $\textbf{a}$ is an oscillation we have either $M-i\geq M-m\geq j-m$ or $M-i\leq M-m \leq j-m$. However, $|i_j-i_{j+1}|\leq M-m$ for all $j\in [r-1]$, and thus the only option we have is  $i=m$ and $j=M$. But then, $\textbf{a}$ is not a reduced word because the factor $\unn{m}\,\tdownn{M}{m}\,\ovv{M}$ is not reduced, which contradicts our assumption. Therefore, the segment $\ovv{M}\,\unn{m}$ must contain an endpoint.  \par 
	Conversely, suppose that $\textbf{a}$ is a word with left endpoint the letter $m$ (the other cases are analogous). From Lemma \ref{lemma:endpoints}, $\textbf{a}=\tupp{m}{M}\,\textbf{q}$ for some word $\textbf{q}\in [m,M-1]^*$. For $\textbf{a}$ to be reduced, Theorem \ref{TheoremEquiv2} implies that $\vv{\textbf{a}}$ is weakly increasing and $\p{\textbf{a}}$ decreasing. Hence, $\textbf{a}$ is an oscillation.
\end{proof}
Then, we have the following.
\begin{proposition}
	Let $\sigma \in \sym{n+1}$ and $\textbf{a}\in \atoms{\sigma}$. If $\textbf{a}$ is an oscillation, then all one-element commutation classes of $\sigma$ are oscillations.   \label{osc2}
\end{proposition}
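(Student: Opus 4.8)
The plan is to show that the property ``being an oscillation'' is constant on $\atoms{\sigma}$ by leveraging the dichotomy from Lemma~\ref{endpoints}: a word in $\atoms{\sigma}$ is an oscillation if and only if one of its endpoints is $m$ or $M$. So it suffices to prove that if some $\textbf{a}\in\atoms{\sigma}$ has an endpoint equal to $m$ or $M$, then every $\textbf{b}\in\atoms{\sigma}$ also has an endpoint equal to $m$ or $M$. By the symmetry between $m$ and $M$ (reading words backwards, or applying the automorphism $i\mapsto m+M-i$ on the alphabet), it is enough to treat the case where $\textbf{a}$ has the letter $m$ as one of its endpoints; say the left endpoint, so by Lemma~\ref{lemma:endpoints} we have $\sigma(M+1)=m$ and $\textbf{a}=\tupp{m}{M}\,\textbf{q}$.

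First I would record what $\sigma(M+1)=m$ forces. Since $m$ is the minimum non-fixed point, $\{\sigma(1),\dots,\sigma(m-1)\}=[m-1]$, and since $\sigma(M+1)=m$, the value $m$ sits in position $M+1$; I want to argue that then $\sigma(m)$ must be the \emph{largest} value among $\sigma(1),\dots,\sigma(M+1)$, i.e. $\sigma(m)=M+1$ is impossible unless... — more precisely I would pin down that the ``incoming'' arrow at position $m$ and the ``outgoing'' situation are tightly constrained. The cleanest route is: apply Lemma~\ref{lemma:endpoints} to the arbitrary $\textbf{b}\in\atoms{\sigma}$. Since $\sigma(M+1)=m$, part~1 of that lemma says $\textbf{b}=\textbf{p}\,\tupp{m}{M}\,\textbf{q}'$ with $\textbf{p}\in[m+1,M]^*$ and $\textbf{q}'\in[m,M-1]^*$. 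So $\textbf{b}$ contains the segment $\tupp{m}{M}$, namely a vale $m$ immediately followed (within that ascending run) by a pinnacle $M$. I then want to show that this occurrence of $m$ must in fact be the left endpoint of $\textbf{b}$, i.e. that $\textbf{p}$ is empty.

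Suppose for contradiction $\textbf{p}$ is nonempty, so $\textbf{b}=\textbf{p}\,\tupp{m}{M}\,\textbf{q}'$ with $\textbf{p}\neq\varepsilon$ and the first letter of $\textbf{p}$ lies in $[m+1,M]$. Then the vale $m$ in the displayed segment is an \emph{interior} vale of $\textbf{b}$: it has a neighbor on the left. Now I compare with the known good word $\textbf{a}=\tupp{m}{M}\,\textbf{q}$, whose leftmost occurrence of $m$ is the endpoint. The key is condition~5 of Theorem~\ref{TheoremTenner} together with the repeated/symmetric-segment ban of Theorem~\ref{TheoremEquiv2}: because $\textbf{a}$ exists, the pair $(m,M)$ appearing consecutively in $\pv{\textbf{b}}$ forces, via the symmetric-segments analysis, that there cannot be a second ``descending then ascending'' excursion down to $m$ on the left of this segment without creating a factor with symmetric segments $\tdownn{M}{m}\,\textbf{c}\,\tupp{m}{M}$ (the left part of $\textbf{p}$ would have to come back down to $m$ or below, but $\textbf{b}\in[m,M]^*$ means it cannot go below $m$, and coming down to $m$ produces exactly a symmetric-segment factor straddling a spike). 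I expect the main obstacle to be exactly this step: ruling out a nonempty prefix $\textbf{p}$ cleanly. The subtlety is that $\textbf{p}$ need not itself reach $m$; it could stay inside $[m+1,M]$, so I cannot immediately invoke a repeated/symmetric $m$--$M$ segment. I would resolve this by instead arguing on the maximal letter $M$: $\textbf{p}\in[m+1,M]^*$ either contains a letter $M$ or not. If it does, then $\textbf{b}$ has an occurrence of $M$ strictly left of the segment $\tupp{m}{M}$; combined with the pinnacle $M$ in the segment and the fact that by condition~4 the extreme values $m,M$ are consecutive in $\pv{\textbf{b}}$, one gets two pinnacles $M$ with the vale structure between them forcing a repeated or symmetric segment, contradicting Theorem~\ref{TheoremEquiv2}. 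If $\textbf{p}$ contains no letter $M$, then $\textbf{p}\in[m+1,M-1]^*$, and I would apply Lemma~\ref{lemma:fixedelts}/Lemma~\ref{lemmaTenner} to the prefix word $\textbf{p}\,\tupp{m}{M}$ read as a permutation to contradict either the minimality of $m$ as a non-fixed point or the structure $\sigma(M+1)=m$ (the prefix already fixes every letter outside $[m,M]$, and one checks it would have to use a letter $m$ before the segment's own $m$, violating condition~5 of Theorem~\ref{TheoremTenner} since neither such $m$ is an endpoint). Either way $\textbf{p}=\varepsilon$, so $\textbf{b}$ has left endpoint $m$, hence is an oscillation by Lemma~\ref{endpoints}. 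Finally, handling symmetrically the three remaining cases (right endpoint $m$, left endpoint $M$, right endpoint $M$) via the reversal and alphabet-reflection symmetries completes the argument.
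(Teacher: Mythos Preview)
Your proof has a genuine gap: the intermediate claim you aim for---that $\textbf{p}$ must be empty, i.e.\ that every $\textbf{b}\in\atoms{\sigma}$ has left endpoint $m$---is simply false. Take $\sigma=(3,2,1)\in\sym{3}$, so $m=1$, $M=2$. Both $\textbf{a}=121$ and $\textbf{b}=212$ lie in $\atoms{\sigma}$. The word $\textbf{a}$ has left endpoint $m$, but $\textbf{b}$ does not: in the decomposition $\textbf{b}=\textbf{p}\,\tupp{1}{2}\,\textbf{q}$ from Lemma~\ref{lemma:endpoints} one has $\textbf{p}=2\neq\varepsilon$. The correct target is only that $\textbf{b}$ is an oscillation, i.e.\ that \emph{some} endpoint of $\textbf{b}$ equals $m$ or $M$; your stronger claim cannot be established, and indeed Lemma~\ref{endpoints2} shows that at most one word in $\atoms{\sigma}$ can have left endpoint $m$. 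Consequently, in your case ``$\textbf{p}$ contains a letter $M$'' there is no contradiction with Theorem~\ref{TheoremEquiv2}: two pinnacles $M$ are allowed precisely when one of them is an endpoint (condition~5 of Theorem~\ref{TheoremTenner}), and in the example above that is exactly what happens. Your other case, $\textbf{p}\in[m+1,M-1]^*$ nonempty, is argued only by a vague appeal to Lemmas~\ref{lemmaTenner} and~\ref{lemma:fixedelts} and a claim that a second letter $m$ must appear in $\textbf{p}$, which is impossible by construction; no actual mechanism is given.

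The paper's proof avoids this by never attempting to pin down the endpoints of $\textbf{b}$. Instead it extracts from the given oscillation $\textbf{a}=\tupp{m}{M}\,\unn{i}\,\textbf{q}'$ the numerical constraint $\sigma(m)\in\{m+1,\,M+1\}$. It then supposes $\textbf{b}$ is \emph{not} an oscillation, so (using Theorem~\ref{TheoremTenner}) it can be written $\textbf{b}=\textbf{p}\,\ovv{j}\,\unn{m}\,\ovv{M}\,\unn{k}\,\textbf{q}''$ with $m<j<M$, and computes $\sigma(m)=j+1$. Since $m+1<j+1\leq M$, this contradicts $\sigma(m)\in\{m+1,M+1\}$. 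The key idea you are missing is this use of the specific value $\sigma(m)$ read off from $\textbf{a}$ as an invariant that a non-oscillation $\textbf{b}$ would violate.
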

\begin{proof}
	Assume that $m$ and $M+1$ are the minimum and maximum non-fixed points of $\sigma$, respectively. Since $\textbf{a}$ is an oscillation, from the previous lemma we have that at least one of its endpoints must be a letter $m$ or $M$. Assume that the left endpoint of $\textbf{a}$ is a letter $m$ (the other cases are analogous). Then, we can write $\textbf{a}=\tupp{m}{M}\,\textbf{q}$ for some word $\textbf{q}\in [m,M-1]^*$ with $\sigma(M+1)=m$, by Lemma \ref{lemma:endpoints}. \par 
	If $\textbf{q}$ is empty, then $\textbf{a}=m(m+1)\cdots M$ which is a word with no commutations nor 3-braid relations available. Thus, $R(\sigma)=\{\textbf{a}\}$ and the result follows. \par 
	If $\textbf{q}$ is not empty, from Theorem \ref{TheoremEquiv2} we can write  $\textbf{a}=\tupp{m}{M}\,\unn{i}\,\textbf{q}'$ for some $i\in [m,M-1]$ and $\textbf{q}'\in [i+1,M-1]^*$. Since $\textbf{q}'$ does not contain letters $m$ nor $m-1$, the image $\sigma(m)$ is determined by the permutation associated to the left factor $\tupp{m}{M}\,\unn{i}$ of $\textbf{a}$. The possible values for $\sigma(m)$ are $M+1$ (if $i=m$) and $m+1$ (if $i>m$). These values will be important in order to arrive to a contradiction more ahead. Now suppose that $\textbf{b}\in \atoms{\sigma}$ is not an oscillation. Then, neither $m$ nor $M$ are endpoints of $\textbf{b}$, by Lemma \ref{endpoints}. Since $\textbf{b}$ must contain the segment $\tupp{m}{M}$, we can write this word as 
	$$\textbf{b}=\textbf{p}\,\ovv{j}\,\unn{m}\,\ovv{M}\,\unn{k}\,\textbf{q}'',$$  for some words $\textbf{p}$ and $\textbf{q}''$, and some integers $j$ and $k$. From Theorem \ref{TheoremTenner}, the fact that $\p{\textbf{b}}$ is a wedge and $\vv{\textbf{b}}$ is a vee implies that $\textbf{p}\in [m+1,j-1]^*$ and $\textbf{q}''\in [k+1,M-1]$. Moreover, we must have $k \neq m$; otherwise $\textbf{b}$ would contain two vales $m$, one of which would need to be an endpoint of $\textbf{b}$ by condition 5 of Theorem \ref{TheoremTenner}, implying that $\textbf{b}$ would be an oscillation by Lemma \ref{endpoints}. Using similar arguments, we also have that $j\neq M$. From Lemma \ref{lemma:fixedelts}, since $\textbf{p}\in [m+1,j-1]^*$, the permutation associated to $\textbf{p}$ fixes $j+1$. But then, the permutation associated to the segment $\tdownn{j}{m}$ is going to ``move" the integer $j+1$ to position $m$ of the one-line notation, which will remain there since $\textbf{q}''$ does not contain letters $m$. These means that $\sigma(m)=j+1$ with $m<j<M$, contradicting the fact that $\sigma(m)\in \{m+1,M+1\}$. Therefore, $\textbf{b}$ must also be an oscillation.
\end{proof}
The previous result is interesting on its own as it forces the one-element commutation classes for the same permutation to have something strong in common. Moreover, it allows us to divide our study in two cases: permutations whose one-element commutation classes are oscillations and permutations whose one-element commutation classes are not oscillations. For the first case, it remains to show the following.

\begin{lemma}
	Let $\sigma \in \sym{n+1}$ with $m$ and $M+1$ the minimum and maximum non-fixed points of $\sigma$, respectively. Then,
	\begin{enumerate}
		\item $\sigma$ contains at most one oscillation with left/right endpoint the letter $m$.
		\item $\sigma$ contains at most one oscillation with left/right endpoint the letter $M$.  
	\end{enumerate}
	\label{endpoints2}
\end{lemma}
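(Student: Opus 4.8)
The plan is to show each item by a contradiction argument, using Lemma~\ref{lemma:endpoints} to pin down the global shape of such oscillations and Theorem~\ref{TheoremEquiv2} to forbid repeated or symmetric segments. I will treat item~1 in detail; item~2 follows by the reverse-alphabet symmetry $i\mapsto (m+M)-i+1$ that swaps the roles of $m$ and $M$ (and interchanges pinnacles with vales). So fix $\sigma$ with minimum/maximum non-fixed points $m$ and $M+1$, and suppose $\textbf{a},\textbf{b}\in\atoms{\sigma}$ are two oscillations, both having the letter $m$ as an endpoint; I want to conclude $\textbf{a}=\textbf{b}$.

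First I would normalize. Since $m$ is an endpoint of an oscillation $\textbf{a}\in\atoms{\sigma}$, Lemma~\ref{lemma:endpoints} gives $\sigma(M+1)=m$ and lets me write $\textbf{a}=\tupp{m}{M}\,\textbf{q}$ with $\textbf{q}\in[m,M-1]^*$ (after possibly reading $\textbf{a}$ from right to left, which replaces $\textbf{a}$ by its reverse — still a one-element commutation class, now with $m$ as left endpoint; I should remark that a word and its reverse being both in $\atoms{\sigma}$ would already force $\textbf{a}$ to be determined, or handle the left/right cases in parallel). Because $\textbf{a}$ is an oscillation starting at its longest possible first segment $\tupp{m}{M}$, the segment-length sequence is weakly decreasing; combined with ``no repeated segments'' from Theorem~\ref{TheoremEquiv2}, the lengths $|i_j-i_{j+1}|$ are \emph{strictly} decreasing after the first step, and the line diagram is forced into a unique ``nested staircase'' shape: from Theorem~\ref{TheoremTenner}, $\p{\textbf{a}}$ is weakly decreasing and $\vv{\textbf{a}}$ weakly increasing, and the pinnacle/vale values must be distinct (another repeated-segment / symmetric-segment obstruction), so $\pv{\textbf{a}}=m\,M\,i\,(M-1)\,(i+1)\,(M-2)\cdots$ is completely determined once one knows which letters in $[m,M]$ actually occur.

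The remaining freedom is therefore only in \emph{which} pinnacles and vales are present, i.e.\ the final segment and where the zig-zag terminates; I would show this too is forced by $\sigma$. The key point, already used in the proof of Proposition~\ref{osc2}, is that reading the prefix $\tupp{m}{M}\,\unn{i}$ of $\textbf{a}$ determines $\sigma(m)$: it is $M+1$ if $i=m$ and $m+1$ if $i>m$; more generally each successive spike value is pinned down by requiring consistency with a fixed one-line notation, because once a letter stops appearing further right its effect on the relevant position is frozen (Lemma~\ref{lemma:fixedelts}). So I would argue inductively along $\pv{\textbf{a}}$ and $\pv{\textbf{b}}$ simultaneously: the first few spikes agree ($m,M$), and if the two words first differ at spike position $t$, then the two prefixes up to that point induce permutations that disagree on the image of a position that is thereafter untouched (the smallest or largest letter not yet ``used up''), contradicting $\sigma_{\textbf{a}}=\sigma=\sigma_{\textbf{b}}$. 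Hence $\pv{\textbf{a}}=\pv{\textbf{b}}$, and since a word formed by consecutive integers is determined by its spike subword, $\textbf{a}=\textbf{b}$.

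The main obstacle, I expect, is the bookkeeping in that last inductive step: making precise which position's image is ``frozen'' at each stage and checking that the two candidate values genuinely differ (rather than coincidentally agreeing), especially at the very last segment where one word might terminate while the other continues with one more short segment. Handling the left-endpoint versus right-endpoint cases, and the mild subtlety that $\textbf{a}$ could equal its own reverse, are the other points that need care but should be routine given Lemmas~\ref{lemma:endpoints} and~\ref{endpoints}. Everything else — the shape of an oscillation in $\atoms{\sigma}$, the strictness of the length sequence, the distinctness of spike values — is a direct consequence of Theorems~\ref{TheoremTenner} and~\ref{TheoremEquiv2}.
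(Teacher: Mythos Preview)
Your structural claim about the spike sequence is false, and this breaks the argument as written. You assert that an oscillation in $\atoms{\sigma}$ with left endpoint $m$ must have $\pv{\textbf{a}}=m\,M\,i\,(M-1)\,(i+1)\,(M-2)\cdots$, i.e.\ pinnacles $M,M-1,M-2,\ldots$ consecutively and vales $m,i,i+1,\ldots$ consecutively. But take $\textbf{a}=\unn{1}\,\ovv{5}\,\unn{2}\,\ovv{3}=123454323$: this is a genuine one-element commutation class (it has $9$ letters and one checks its permutation $(2,6,4,3,5,1)$ has $9$ inversions), yet its second pinnacle is $3$, not $M-1=4$. In general Theorem~\ref{TheoremEquiv2} only forces $\p{\textbf{a}}$ to be strictly decreasing and $\vv{\textbf{a}}$ weakly increasing, not that they hit consecutive integers. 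So the line ``is completely determined once one knows which letters in $[m,M]$ actually occur'' is not justified, and your later ``freezing'' argument, which is already only sketched, is left without the scaffolding you intended for it.

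The paper's proof avoids all of this and is much shorter. It inducts on $\ell(\sigma)$: if $\textbf{a},\textbf{b}\in\atoms{\sigma}$ are oscillations with left endpoint $m$, Lemma~\ref{lemma:endpoints} gives $\textbf{a}=\tupp{m}{M}\,\textbf{q}$ and $\textbf{b}=\tupp{m}{M}\,\textbf{q}'$ with $\textbf{q},\textbf{q}'\in[m,M-1]^*$; these are reduced words for the \emph{same} shorter permutation $\pi$, and since each has left endpoint $M-1$ (its maximal letter), Lemma~\ref{endpoints} says $\textbf{q},\textbf{q}'$ are again oscillations in $\atoms{\pi}$ with that prescribed endpoint, so the induction hypothesis gives $\textbf{q}=\textbf{q}'$. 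This is essentially the ``strip one spike and repeat'' idea you are reaching for with your inductive argument along $\pv{\textbf{a}}$, but packaged so that all the delicate bookkeeping you flag as the ``main obstacle'' is absorbed into a single clean application of Lemma~\ref{endpoints} at each step. In particular, no explicit description of the spike sequence, and no case analysis on which position is ``frozen'', is needed.
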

\begin{proof}
	We use induction on the length of $\sigma$ to prove both conditions. If $\ell(\sigma)=1$, then $\sigma$ contains only one reduced word and result follows. \par 
	Let $\sigma \in \sym{n+1}$ and suppose that conditions 1 and 2 holds for every permutation $\pi\in \sym{n+1}$ satisfying $\ell(\pi)< \ell(\sigma)$. Consider  $\textbf{a},\textbf{b}\in \atoms{\sigma}$ oscillations where both words have left endpoint the letter $m$ (the other cases are analogous). Our goal is to show that $\textbf{a}=\textbf{b}$. From Lemma \ref{lemma:endpoints}, we can write these words as
	\begin{align*}
	\textbf{a}&=\tupp{m}{M}\textbf{q}\\
	\textbf{b}&=\tupp{m}{M}\textbf{q}',
	\end{align*} for some words $\textbf{q},\textbf{q}'\in [m,M-1]^*$. Note that these words $\textbf{q}$ and $\textbf{q}'$ are associated to the same permutation $\pi\in \sym{n+1}$ with $\ell(\pi)<\ell(\sigma)$. If $\pi$ is the identity permutation, then $\textbf{q}$ and $\textbf{q}'$ are the empty word and the result follows. Otherwise, since $\textbf{q},\textbf{q}'\in [m,M-1]^*$ and their left endpoint is the letter $M-1$, we have that $\textbf{q}$ and $\textbf{q}'$ are oscillations in $\atoms{\pi}$, by Lemma \ref{endpoints}. Using the induction hypothesis, there is at most one oscillation in $\atoms{\pi}$ with such left endpoint. Therefore, $\textbf{q}=\textbf{q}'$ and we have $\textbf{a}=\textbf{b}$.  
\end{proof}

We are now in condition to prove Theorem \ref{thm1} for permutations that contain oscillations as one-element commutation classes.

\begin{theorem}
	Let $\sigma\in \sym{n+1}$ such that $\atoms{\sigma}$ contains an oscillation. Then, $|\atoms{\sigma}|\leq 4$.   \label{maxosc}
\end{theorem}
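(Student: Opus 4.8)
The plan is to assemble the three preceding results — Proposition~\ref{osc2}, Lemma~\ref{endpoints}, and Lemma~\ref{endpoints2} — into a short pigeonhole argument. First, since $\atoms{\sigma}$ is assumed to contain an oscillation, Proposition~\ref{osc2} tells us that \emph{every} word in $\atoms{\sigma}$ is an oscillation. So it suffices to bound the number of oscillations lying in $\atoms{\sigma}$.

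Next, I would classify each oscillation $\textbf{a}\in\atoms{\sigma}$ according to its endpoints. Let $m$ and $M+1$ be the minimum and maximum non-fixed points of $\sigma$. By Lemma~\ref{endpoints}, since $\textbf{a}$ is an oscillation, at least one of its two endpoints (its leftmost or its rightmost letter) equals $m$ or $M$. Hence $\textbf{a}$ belongs to at least one of the following four (not necessarily disjoint) families: oscillations in $\atoms{\sigma}$ with left endpoint $m$, with right endpoint $m$, with left endpoint $M$, or with right endpoint $M$.

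Finally, Lemma~\ref{endpoints2} guarantees that each of these four families contains at most one word. Therefore $\atoms{\sigma}$ — which, by the first step, coincides with the set of all oscillations in $\atoms{\sigma}$ — is contained in a union of four sets each of cardinality at most one, and so $|\atoms{\sigma}|\leq 4$.

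The only point needing a moment's care is that a single oscillation may lie in more than one of the four families (for instance, having $m$ as its left endpoint and $M$ as its right endpoint), so the families genuinely overlap; but this can only decrease the size of the union, so the bound of $4$ is unaffected. There is no real obstacle here: all the substance of the argument has already been carried out in establishing Proposition~\ref{osc2} and Lemmas~\ref{endpoints} and~\ref{endpoints2}, and the present theorem is essentially their corollary.
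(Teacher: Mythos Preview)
Your proof is correct and follows essentially the same approach as the paper: invoke Proposition~\ref{osc2} to reduce to oscillations, use Lemma~\ref{endpoints} to place each oscillation into one of four endpoint classes, and apply Lemma~\ref{endpoints2} to bound each class by one. Your remark about the possible overlap of the four families is a valid clarification, but otherwise the argument matches the paper's proof.
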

\begin{proof}
	Assume that $m$ and $M+1$ are the minimum and maximum non-fixed points of $\sigma$, respectively. Since $\atoms{\sigma}$ contains an oscillation, every one-element commutation class for $\sigma$ must also be an oscillation, by Proposition \ref{osc2}. Thus, from Lemma \ref{endpoints}, every element in $\atoms{\sigma}$ satisfies at least one of the following conditions:
	\begin{itemize}
		\item $m$ is the right endpoint,
		\item $M$ is the right endpoint,
		\item $m$ is the left endpoint,
		\item $M$ is the left endpoint.
	\end{itemize}
	But from the previous lemma, there is at most one word in $\atoms{\sigma}$ for each of the previous four conditions. Therefore, $|\atoms{\sigma}|\leq 4$. 
\end{proof}

\subsection{The non-oscillation case and the proof of Theorem \ref{thm1}}
It remains to extend the previous result for permutations which do not contain oscillations as one-element commutation classes. Nevertheless, what we know about oscillations will be used as well. As might be expected, one-element commutation classes that are not oscillations exhibit a structure that depends on their associated permutation.   
\begin{lemma}
	Let $\sigma \in \sym{n+1}$ with $m$ and $M+1$ the minimum and maximum non-fixed points of $\sigma$, respectively. If $\textbf{a}\in \atoms{\sigma}$ is not an oscillation, then $\sigma$ does not contain oscillations. Moreover, there are integers $i\neq m$ and $j\neq M$ that do not depend on $\textbf{a}$ such that
	\begin{enumerate}
		\item If $\sigma(M+1)=m$, then $\textbf{a}=\textbf{p}\,\ovv{j}\,\unn{m}\,\ovv{M}\,\unn{i}\,\textbf{q}$ where $\textbf{p} \in [m+1,j-1]^*$, $\textbf{q}\in [i+1,M-1]^*$, $\sigma(m)=j+1$ and $\sigma(i)=M+1$.
		\item If $\sigma(m)=M+1$, then $\textbf{a}= \textbf{p}\,\unn{i}\,\ovv{M}\,\unn{m}\,\ovv{j}\,\textbf{q}$ where $\textbf{p} \in [i+1,M-1]^*$, $\textbf{q}\in [m+1,j-1]^*$, $\sigma(j+1)=m$ and $\sigma(M+1)=i$.  
	\end{enumerate} \label{nonosc}
\end{lemma}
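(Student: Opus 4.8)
The first assertion is just the contrapositive of Proposition \ref{osc2}: were $\atoms{\sigma}$ to contain an oscillation, then every word of $\atoms{\sigma}$ would be one, contradicting the assumption on $\textbf{a}$. The content therefore lies in the structural description, and the plan is to establish it when $\sigma(M+1)=m$ and then derive the case $\sigma(m)=M+1$ by reversing words.

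So assume $\sigma(M+1)=m$. Since $\atoms{\sigma}\neq\emptyset$, the first part of Lemma \ref{lemma:endpoints} gives $\textbf{a}=\textbf{p}_0\,\unn{m}\,\ovv{M}\,\textbf{q}_0$ with $\textbf{p}_0\in[m+1,M]^*$ and $\textbf{q}_0\in[m,M-1]^*$, so $m$ and $M$ are the global minimum and maximum of $\textbf{a}$; moreover each of $m$ and $M$ occurs exactly once in $\textbf{a}$, for otherwise condition 5 of Theorem \ref{TheoremTenner} would put a copy of $m$ or $M$ at an endpoint, and then Lemma \ref{endpoints} would make $\textbf{a}$ an oscillation. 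Since $\textbf{a}$ is not an oscillation, Lemma \ref{endpoints} also forbids $m$ and $M$ from being endpoints, so $\textbf{a}$ has a letter just left of the vale $m$ and one just right of the pinnacle $M$; as $\textbf{a}$ is formed by consecutive integers, this sharpens the factorization to $\textbf{a}=\textbf{p}\,\ovv{j}\,\unn{m}\,\ovv{M}\,\unn{i}\,\textbf{q}$ for a pinnacle $j$ and a vale $i$ with $m<j$ and $i<M$. Using that $\p{\textbf{a}}$ is a wedge and $\vv{\textbf{a}}$ a vee (Theorem \ref{TheoremTenner}) one gets $\textbf{p}\in[m+1,j-1]^*$ and $\textbf{q}\in[i+1,M-1]^*$, while the uniqueness of $m$ and $M$ forces $j\neq M$ and $i\neq m$. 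This is precisely the reasoning already performed inside the proof of Proposition \ref{osc2}, which in addition yields $\sigma(m)=j+1$.

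It remains to pin down $\sigma(i)$ and to see that $i,j$ are determined by $\sigma$ alone. Write $\textbf{a}=\textbf{a}_L\cdot M\cdot\textbf{a}_R$, splitting at the unique occurrence of the letter $M$; then $\textbf{a}_L\in[m,M-1]^*$ and $\textbf{a}_R=(M-1)(M-2)\cdots(i+1)\,i\,\textbf{q}$ with $\textbf{q}\in[i+1,M-1]^*$. Following position $i$ from the right: $\sigma_{\textbf{q}}$ fixes $i$, the descending run $(M-1)\cdots i$ carries $i$ to $M$, the single $s_M$ carries $M$ to $M+1$, and $\sigma_{\textbf{a}_L}$ fixes $M+1$ since $\textbf{a}_L$ uses no letter $\geq M$; hence $\sigma(i)=M+1$. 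Consequently $j=\sigma(m)-1$ and $i=\sigma^{-1}(M+1)$, so any two non-oscillating words of $\atoms{\sigma}$ share the same pair $(i,j)$, which is the asserted independence. For the case $\sigma(m)=M+1$, observe that $\sigma^{-1}$ has the same minimum and maximum non-fixed points, satisfies $\sigma^{-1}(M+1)=m$, and that reversal of words is a bijection $\atoms{\sigma}\to\atoms{\sigma^{-1}}$ preserving the non-oscillation property — both facts follow from Theorem \ref{TheoremEquiv2} together with the observation that reversal turns a segment $\unn{a}\,\ovv{b}$ into $\ovv{b}\,\unn{a}$ and preserves spikes; applying the already proved case to $\sigma^{-1}$ and reversing the resulting factorization gives item 2.

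I expect no essential difficulty, but the finicky point is the second paragraph: upgrading the coarse factorization of Lemma \ref{lemma:endpoints} to the exact shape $\textbf{p}\,\ovv{j}\,\unn{m}\,\ovv{M}\,\unn{i}\,\textbf{q}$ with the precise alphabets and the strict relations $i\neq m$, $j\neq M$ requires juggling, simultaneously, that $\textbf{a}$ is not an oscillation, that $m$ and $M$ are extremal and appear once, and the wedge/vee shape of $\p{\textbf{a}}$ and $\vv{\textbf{a}}$. The only genuinely new step beyond the proof of Proposition \ref{osc2} is the short computation $\sigma(i)=M+1$, after which the independence of $i$ and $j$ on $\textbf{a}$ is automatic.
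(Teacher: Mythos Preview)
Your proof is correct and follows essentially the same route as the paper's: both invoke the contrapositive of Proposition \ref{osc2} for the first assertion, both recycle the argument inside the proof of Proposition \ref{osc2} to obtain the factorization $\textbf{p}\,\ovv{j}\,\unn{m}\,\ovv{M}\,\unn{i}\,\textbf{q}$ with the stated alphabets, $i\neq m$, $j\neq M$, and $\sigma(m)=j+1$, and both finish by tracking position $i$ through the factor $\ovv{M}\,\unn{i}$ to get $\sigma(i)=M+1$. Your treatment is slightly more explicit (you spell out the uniqueness of $m$ and $M$ in $\textbf{a}$ and write $i=\sigma^{-1}(M+1)$, $j=\sigma(m)-1$ to make the independence from $\textbf{a}$ visible), and you handle item 2 by reversal to $\sigma^{-1}$ rather than by saying ``analogous'', but none of this constitutes a different approach.
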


\begin{proof}
From Proposition \ref{osc2}, since $\textbf{a}$ is not an oscillation there cannot exist any oscillations in $\atoms{\sigma}$. We prove only condition 1, since the proof for condition 2 is analogous. From the proof of Proposition \ref{osc2}, we have that $\textbf{a}$ can be written as $$\textbf{a}=\textbf{p}\,\ovv{j}\,\unn{m}\,\ovv{M}\,\unn{i}\,\textbf{q},$$ where $\textbf{p} \in [m+1,j-1]^*$, $\textbf{q}\in [i+1,M-1]^*$, $j\neq M$ and $i\neq m$. We have also shown that $\sigma(m)=j+1$. To prove that $\sigma(i)=M+1$, just note that the permutation associated to the segment $\tdownn{M}{i}$ ``moves" the integer $M+1$ to position $i$ in the one-line notation, which will remain there since $\textbf{q}$ does not contain letters $i$ nor $i-1$. Hence, $\sigma(i)=M+1$. 
\end{proof}
Returning to Example \ref{ex:segs}, we have that $\sigma(1)=8$, $\sigma(10)=1$ and $\sigma(2)=10$. Since $\atoms{\sigma}$ contains a non-oscillation, from condition 1 of the previous lemma, every one-element commutation class for $\sigma$ must contain the factor $\ovv{7}\,\unn{1}\,\ovv{9}\,\unn{2}$, which can be checked by looking to their line diagrams. \par 
Note that if \(\sigma \in \sym{n+1}\) satisfies \(\sigma(M+1) = m\), then \(\sigma^{-1}\) satisfies \(\sigma^{-1}(m) = M+1\), with \(m\) and \(M+1\) being the minimum and maximum non-fixed points of \(\sigma^{-1}\) as well. This gives a bijection between the sets $\{\sigma \in \sym{n+1}:\sigma(M+1)=m\}$ and $\{\sigma \in \sym{n+1}:\sigma(m)=M+1\}$, which are the sets of permutations that might have one-element commutation classes, by Lemma \ref{lemma:endpoints}. We also have a bijection between $\atoms{\sigma}$ and $\atoms{\sigma^{-1}}$, which is given by \(\textbf{a} \mapsto \textbf{a}^r\), where $\textbf{a}^r$ is the word $\textbf{a}$ written backwards. Therefore, it suffices to prove Theorem \ref{thm1} for those permutations \(\sigma\) such that \(\sigma(M+1) = m\). For that, we will consider two cases depending on the values of $i$ and $j$ mentioned in the previous lemma.  
\begin{proposition}
	Let $\sigma \in \sym{n+1}$ with $m$ and $M+1$ the minimum and maximum non-fixed points of $\sigma$, respectively. Suppose that $|\atoms{\sigma}|>0$, $\sigma(M+1)=m$, $\sigma(m)=j+1$ and $\sigma(i)=M+1$ for some $i\neq m$ and $j\neq M$. If $\atoms{\sigma}$ contains a non-oscillation and $j<i$, then $|\atoms{\sigma}|=1$.  
	\label{p1}
\end{proposition}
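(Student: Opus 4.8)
The plan is to prove that $\atoms{\sigma}$ consists of a single word. Since $\atoms{\sigma}$ contains a non-oscillation, Lemma~\ref{nonosc} forces $\sigma$ to contain no oscillations at all; hence \emph{every} $\textbf{a}\in\atoms{\sigma}$ is a non-oscillation, and since $\sigma(M+1)=m$, condition~1 of Lemma~\ref{nonosc} shows that
\[
\textbf{a}=\textbf{p}\,\ovv{j}\,\unn{m}\,\ovv{M}\,\unn{i}\,\textbf{q},\qquad \textbf{p}\in[m+1,j-1]^*,\qquad \textbf{q}\in[i+1,M-1]^*,
\]
where the integers $i$ and $j$ depend only on $\sigma$ (explicitly $j=\sigma(m)-1$ and $i=\sigma^{-1}(M+1)$), and one checks $m<j<i<M$. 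It therefore suffices to show that $\textbf{p}$ and $\textbf{q}$ are uniquely determined by $\sigma$.

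The heart of the argument is that the hypothesis $j<i$ \emph{decouples} $\textbf{p}$ from $\textbf{q}$. Let $\sigma_\textbf{p}$, $\tau$, $\sigma_\textbf{q}$ be the permutations of $\textbf{p}$, of the central factor $\textbf{c}:=\ovv{j}\,\unn{m}\,\ovv{M}\,\unn{i}$, and of $\textbf{q}$, so that $\sigma=\sigma_\textbf{p}\,\tau\,\sigma_\textbf{q}$. As $\textbf{p}$ uses only generators with indices in $[m+1,j-1]$ and $\textbf{q}$ only generators with indices in $[i+1,M-1]$, the permutation $\sigma_\textbf{p}$ is supported on $A:=\{m+1,\dots,j\}$ and $\sigma_\textbf{q}$ on $B:=\{i+1,\dots,M\}$. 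A direct computation on the word $\textbf{c}=j\,(j-1)\cdots(m+1)\,m\,(m+1)\cdots(M-1)\,M\,(M-1)\cdots(i+1)\,i$ yields $\tau(m)=j+1$, $\tau(i)=M+1$, $\tau(M+1)=m$, $\tau(k)=k+1$ for $k\in[j+1,i-1]$, and $\tau(k)=k$ for every remaining $k$; in particular $\tau$ restricts to the identity on both $A$ and $B$. Since $j<i$ the sets $A$ and $B$ are disjoint, so evaluating $\sigma=\sigma_\textbf{p}\,\tau\,\sigma_\textbf{q}$ at $k\in A$ gives $\sigma(k)=\sigma_\textbf{p}(\tau(\sigma_\textbf{q}(k)))=\sigma_\textbf{p}(k)$, and at $k\in B$ gives $\sigma(k)=\sigma_\textbf{q}(k)$. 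Hence $\sigma_\textbf{p}$ is precisely the restriction of $\sigma$ to $A$ (extended by the identity) and $\sigma_\textbf{q}$ the restriction of $\sigma$ to $B$; both are determined by $\sigma$ alone, independently of the choice of $\textbf{a}$.

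It remains to recover $\textbf{p}$ from $\sigma_\textbf{p}$ and $\textbf{q}$ from $\sigma_\textbf{q}$. Being a factor of the reduced word $\textbf{a}$, the word $\textbf{p}$ is reduced, formed by consecutive integers, and contains no factor with repeated or symmetric segments, so $\textbf{p}\in\atoms{\sigma_\textbf{p}}$ by Theorem~\ref{TheoremEquiv2}. If $\sigma_\textbf{p}$ is the identity, then $\textbf{p}$ is the empty word; otherwise, since the letter of $\textbf{a}$ immediately after $\textbf{p}$ is $j$ and $\textbf{a}$ is formed by consecutive integers, $\textbf{p}$ must end in $j-1$, which (as $\textbf{p}\in[m+1,j-1]^*$) is also its largest letter. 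Lemmas~\ref{lemma:fixedelts} and~\ref{lemmaTenner} then show that the maximum non-fixed point of $\sigma_\textbf{p}$ is $j$, so $j-1$ is the integer playing the role of ``$M$'' for $\sigma_\textbf{p}$; hence by Lemma~\ref{endpoints} the word $\textbf{p}$ is an oscillation having this integer as right endpoint, and by Lemma~\ref{endpoints2} there is at most one such oscillation in $\atoms{\sigma_\textbf{p}}$. Thus $\textbf{p}$ is uniquely determined. The mirror-image argument — the letter of $\textbf{a}$ immediately before $\textbf{q}$ is $i$, so $\textbf{q}$ begins with $i+1$, its smallest letter and the minimum non-fixed point of $\sigma_\textbf{q}$ — shows that $\textbf{q}$ is the unique oscillation in $\atoms{\sigma_\textbf{q}}$ with left endpoint $i+1$. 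Therefore $\textbf{a}$ is uniquely determined, and since $|\atoms{\sigma}|>0$ we conclude $|\atoms{\sigma}|=1$.

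The step I expect to be the main obstacle is the decoupling: one must compute the permutation $\tau$ of the central factor $\textbf{c}$ carefully enough to certify that it acts as the identity on $A$ and on $B$, and this is exactly where $j<i$ is used — it forces $A$ and $B$ to be disjoint and confines the nontrivial part of $\tau$ lying between them to the harmless shift on $[j+1,i-1]$. (In the opposite regime $j\ge i$, treated separately, these two intervals overlap, $\sigma_\textbf{p}$ and $\sigma_\textbf{q}$ no longer separate cleanly, and a different argument is needed.) A secondary point is the bookkeeping of the cases where $\textbf{p}$ or $\textbf{q}$ is empty, but this causes no difficulty, since emptiness of $\textbf{p}$ is equivalent to $\sigma$ fixing $A$ pointwise, a condition depending only on $\sigma$.
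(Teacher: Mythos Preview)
Your proof is correct and follows essentially the same route as the paper's: both use Lemma~\ref{nonosc} to write every $\textbf{a}\in\atoms{\sigma}$ as $\textbf{p}\,\ovv{j}\,\unn{m}\,\ovv{M}\,\unn{i}\,\textbf{q}$, observe that when $j<i$ the supports of $\sigma_{\textbf{p}}$, the central permutation, and $\sigma_{\textbf{q}}$ are pairwise disjoint so that $\sigma_{\textbf{p}}$ and $\sigma_{\textbf{q}}$ are determined by $\sigma$, and then invoke Lemmas~\ref{endpoints} and~\ref{endpoints2} to pin down $\textbf{p}$ and $\textbf{q}$ uniquely. Your decoupling step (evaluating $\sigma$ pointwise on $A$ and $B$ after computing $\tau$ explicitly) is a slightly more concrete rendering of the paper's argument that the images on $[m+1,j]$ are determined solely by $\sigma_{\textbf{p}}$, but the substance is the same.
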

\begin{proof}
	Suppose that $\textbf{a}\in \atoms{\sigma}$ is not an oscillation, which we know exists by hypothesis. From the previous lemma, there is no oscillation in $\atoms{\sigma}$ and $\textbf{a}$ can be written as $$
		\textbf{a}=\textbf{p}\, \ovv{j}\, \unn{m}\, \ovv{M}\, \unn{i}\, \textbf{q},$$
	where \( \textbf{p} \in [m+1, j-1]^* \) and \( \textbf{q} \in [i+1, M-1]^* \). Since $\textbf{a}$ is a word formed by consecutive integers, if $\textbf{p}$ is not empty, then its right endpoint is the letter $j-1$. But $\textbf{p}\in [m+1,j-1]^*$, and so $j-1$ is its maximum value, implying that $\textbf{p}$ is an oscillation, by Lemma \ref{endpoints}. Similarly, if $\textbf{q}$ is not empty, then it is also an oscillation given the fact that its left endpoint (the letter $i+1$) is its minimum value. Figure \ref{dp_1} depicts the line diagram of $\textbf{a}$, where the blue, red and green parts of the diagram represents the words $\textbf{p}$, $\ovv{j}\, \unn{m}\, \ovv{M}\, \unn{i}$ and $\textbf{q}$, respectively. Denoting by $\gamma$ the permutation associated to the factor $\ovv{j}\, \unn{m}\, \ovv{M}\, \unn{i}$, we have that either $\gamma=(j+1\ m)(j+~1\ j+2\cdots i)(i\ M+1)$ if $i>j+1$, or $\gamma=(j+1\ m)(j+1\ M+1)$ if $i=j+1$. This will be important for the next step. \par 
	\begin{figure}[h]
		\centering
		\includegraphics[scale=0.6]{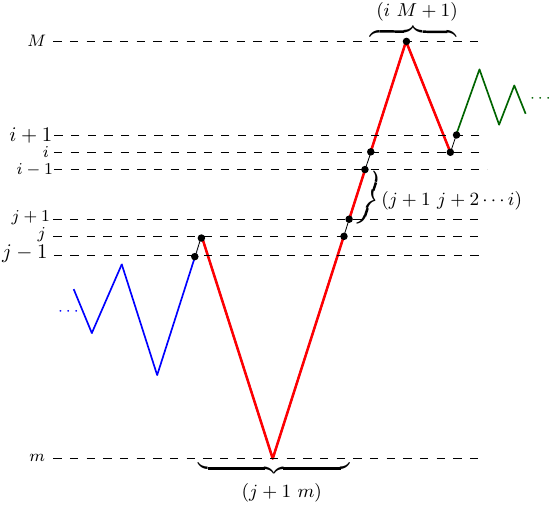}
		\caption{Line diagram of the word $\textbf{p}\, \ovv{j}\, \unn{m}\, \ovv{M}\, \unn{i}\, \textbf{q}$ when $j<i$.}
		\label{dp_1}
	\end{figure} 

Let $\sigma_{\textbf{p}},\sigma_{\textbf{q}}$ be the permutations associated to the words $\textbf{p}$ and $\textbf{q}$, respectively. We claim that the set of non-fixed points of $\sigma_{\textbf{p}},\gamma$ and $\sigma_{\textbf{q}}$ are pairwise disjoint. From Lemma~\ref{lemma:fixedelts}, and the alphabets in which $\textbf{p}$ and $\textbf{q}$ belong, we have that the non-fixed points of $\sigma_{\textbf{p}}$ and $\sigma_{\textbf{q}}$ are contained in the sets $[m+1,j]$ and $[i+1,M]$, respectively, being these sets disjoint since $j< i$. On the other hand, the set of non-fixed points of $\gamma$ is contained in $\{m,M+1\}\cup[j+1,i]$, which is disjoint from the previous two sets, thus proving the claim. \par 
Now suppose that $\textbf{b}\in \atoms{\sigma}$ is also a one-element commutation class of $\sigma$. From the previous lemma, $$\textbf{b}=\textbf{p}'\,\ovv{j}\,\unn{m}\,\ovv{M}\,\unn{i}\,\textbf{q}',$$ for some words $\textbf{p}' \in [m+1,j-1]^*$ and $\textbf{q}'\in [i+1,M-1]^*$. Moreover, the words $\textbf{p}'$ and $\textbf{q}'$ must be oscillations, as we have seen before. If we prove that $\textbf{p}$ and $\textbf{p}'$ (resp. $\textbf{q}$ and $\textbf{q}'$) have the same associated permutation, since their right endpoint (resp. left endpoint) is their maximum value (resp. minimum value), from Lemma \ref{endpoints2} we would have that $\textbf{p}=\textbf{p}'$ (resp. $\textbf{q}=\textbf{q}'$), implying that $\textbf{a}=\textbf{b}$.  To see that, let $\sigma_{\textbf{p}'}$ and $\sigma_{\textbf{q}'}$ be the permutations associated to $\textbf{p}'$ and $\textbf{q}'$, respectively. Then, using the same arguments as before, we can conclude that the sets of non-fixed points of $\sigma_{\textbf{p}'}$ and $\sigma_{\textbf{q}'}$  are contained in $[m+1,j]$ and $[i+1,M]$, respectively. Since $\textbf{a}$ and $\textbf{b}$ are reduced words for the same permutation, we have $\sigma_{\textbf{p}}\gamma \sigma_{\textbf{q}}=\sigma_{\textbf{p}'}\gamma \sigma_{\textbf{q}'}$, being the images $\{\sigma(m+1),\sigma(m+2),\ldots ,\sigma(j)\}$ in both products completely determined by the permutations $\sigma_{\textbf{p}}$ and $\sigma_{\textbf{p}'}$. Therefore, we must have $\sigma_{\textbf{p}}=\sigma_{\textbf{p}'}$, and consequently  $\sigma_{\textbf{q}}=\sigma_{\textbf{q}'}$, which concludes the proof.
\end{proof}

\begin{proposition}
	Let $\sigma \in \sym{n+1}$ with $m$ and $M+1$ the minimum and maximum non-fixed points of $\sigma$, respectively. Suppose that $|\atoms{\sigma}|>0$, $\sigma(M+1)=m$, $\sigma(m)=j+1$ and $\sigma(i)=M+1$ for some $i\neq m$ and $j\neq M$. If $\atoms{\sigma}$ contains a non-oscillation and $j\geq i$, then there is $\pi\in \sym{n+1}$ such that $|\atoms{\sigma}|\leq |\atoms{\pi}|$ and $\ell(\pi)<\ell(\sigma)$.   \label{p2}
\end{proposition}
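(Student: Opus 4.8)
The plan is to construct a length-reducing map $\sigma\mapsto\pi$ on permutations together with an injection $\atoms{\sigma}\hookrightarrow\atoms{\pi}$. Fix a non-oscillation $\textbf{a}\in\atoms{\sigma}$, which exists by hypothesis. By Lemma \ref{nonosc},
$$\textbf{a}=\textbf{p}\,\ovv{j}\,\unn{m}\,\ovv{M}\,\unn{i}\,\textbf{q},\qquad \textbf{p}\in[m+1,j-1]^*,\quad \textbf{q}\in[i+1,M-1]^*,$$
and since $\textbf{a}$ is formed by consecutive integers, $\textbf{p}$ (if nonempty) ends with $j-1$ and $\textbf{q}$ (if nonempty) begins with $i+1$; moreover $m+1\le i\le j\le M-1$. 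Using the hypothesis $j\ge i$, I would replace the factor $\ovv{j}\,\unn{m}\,\ovv{M}\,\unn{i}$ by the single segment $\ovv{j}\,\unn{i}$ and set
$$\Phi(\textbf{a}):=\textbf{p}\,\ovv{j}\,\unn{i}\,\textbf{q},$$
which is again a word formed by consecutive integers (the junctions $\textbf{p}\mid j$ and $i\mid\textbf{q}$ stay legal). Let $\pi$ be the permutation associated to $\Phi(\textbf{a})$.

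The crux is that $\pi$ is independent of the choice of $\textbf{a}$. As words of letters, the removed factor $\ovv{j}\,\unn{m}\,\ovv{M}\,\unn{i}$ is the concatenation of $\ovv{j}\,\unn{m}\,\ovv{M}\,\unn{j+1}$ (with an empty final run when $M=j+1$) and $\ovv{j}\,\unn{i}$; write $\eta$ for the permutation of the first piece and $\tau$ for that of $\ovv{j}\,\unn{i}$. By the computation of $\gamma$ in the proof of Proposition \ref{p1}, with $j+1$ playing the role of the final vale, one obtains $\eta=(j+1\ m)(j+1\ M+1)=(m\ j+1\ M+1)$. Its support $\{m,j+1,M+1\}$ is disjoint from $[m+1,j]$, hence, by Lemma \ref{lemma:fixedelts}, from the support of the permutation $\sigma_{\textbf{p}}$ of $\textbf{p}$; so $\eta$ and $\sigma_{\textbf{p}}$ commute. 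Denoting by $\sigma_{\textbf{q}}$ the permutation of $\textbf{q}$ we then have
$$\sigma=\sigma_{\textbf{p}}\,\eta\,\tau\,\sigma_{\textbf{q}}=\eta\,(\sigma_{\textbf{p}}\,\tau\,\sigma_{\textbf{q}}),$$
so the permutation of $\Phi(\textbf{a})$ equals $\pi=\eta^{-1}\sigma=(m\ M+1\ j+1)\,\sigma$, which depends only on $\sigma$ (through $m$, $M$, $j$).

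Next I would check $\Phi(\textbf{a})\in\atoms{\pi}$ and $\ell(\pi)<\ell(\sigma)$. The word $\ovv{j}\,\unn{m}\,\ovv{M}\,\unn{j+1}$ is a factor of the reduced word $\textbf{a}$, hence reduced, so $\ell(\eta)=2(M-m)$ (its number of letters), and $2(M-m)>0$ since $m<M$. As $\Phi(\textbf{a})$ has exactly $\ell(\sigma)-2(M-m)$ letters and represents $\pi$, we get $\ell(\pi)\le\ell(\sigma)-2(M-m)$; combined with $\ell(\sigma)=\ell(\eta\pi)\le\ell(\eta)+\ell(\pi)$ this forces $\ell(\pi)=\ell(\sigma)-2(M-m)<\ell(\sigma)$ and shows $\Phi(\textbf{a})$ is reduced. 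A reduced word formed by consecutive integers admits no commutation, so $[\Phi(\textbf{a})]=\{\Phi(\textbf{a})\}$ and $\Phi(\textbf{a})\in\atoms{\pi}$. The map $\Phi$ is injective: in $\Phi(\textbf{a})=\textbf{p}\,\ovv{j}\,\unn{i}\,\textbf{q}$ the letter $j$ does not occur in $\textbf{p}$ and the letter $i$ does not occur in $\textbf{q}$, so $\textbf{p}$ is the prefix of $\Phi(\textbf{a})$ before its first $j$ and $\textbf{q}$ is the suffix after its last $i$; hence $\textbf{p}$, $\textbf{q}$, and therefore $\textbf{a}$, are recovered from $\Phi(\textbf{a})$. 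Putting everything together gives $|\atoms{\sigma}|\le|\atoms{\pi}|$ with $\ell(\pi)<\ell(\sigma)$, as required.

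The step I expect to demand the most care — and essentially the only obstacle — is the identification $\eta=(m\ j+1\ M+1)$, or at least the weaker assertion that the permutation of the removed factor fixes every point of $[m+1,j]$. Everything downstream (reducedness from a letter count, well-definedness from $\eta\,\sigma_{\textbf{p}}=\sigma_{\textbf{p}}\,\eta$, injectivity from the alphabets of $\textbf{p}$ and $\textbf{q}$, and the final cardinality comparison) is then routine; but without this disjointness of supports there is no reason why $\Phi(\textbf{a})$ should always represent one and the same $\pi$, which is exactly what the construction relies on.
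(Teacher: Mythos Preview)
Your argument is correct and is essentially the paper's own proof: both construct the same injection $\textbf{a}\mapsto\textbf{p}\,\ovv{j}\,\unn{i}\,\textbf{q}$ into $\atoms{\pi}$. The only cosmetic differences are that the paper factors off the $3$-cycle $\gamma=(m\ i\ M{+}1)$ on the right and commutes it past $\sigma_{\textbf{q}}$ (yielding $\pi=\sigma\gamma^{-1}$), whereas you factor off $\eta=(m\ j{+}1\ M{+}1)$ on the left and commute it past $\sigma_{\textbf{p}}$ (yielding $\pi=\eta^{-1}\sigma$), and the paper certifies that the image word is reduced via Theorem~\ref{TheoremEquiv2} rather than by your length count.
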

\begin{proof}
	The strategy is to construct an injective map from \(\atoms{\sigma}\) to \(\atoms{\pi}\), where \(\pi \in \sym{n+1}\) satisfies \(\ell(\pi) < \ell(\sigma)\). This will be done by removing a certain factor from each word in $\atoms{\sigma}$. \par 
	From Lemma \ref{nonosc}, any non-oscillation $\textbf{a}\in \atoms{\sigma}$ can be written as $$\textbf{a}=\textbf{p}\,\ovv{j}\,\unn{m}\,\ovv{M}\,\unn{i}\,\textbf{q},$$ for some words $\textbf{p} \in [m+1,j-1]^*$ and $\textbf{q}\in [i+1,M-1]^*$. Since $j\geq i$ and $i>m$, we have that $\textbf{a}$ contains the factor $\ovv{i-1}\,\tupp{m}{M}\,\unn{i}$. Figure \ref{dp_2} depicts the possible line diagrams of $\textbf{a}$, where the blue, red and green parts of the line diagram correspond to the words $\textbf{p}\,\ovv{j}\,\unn{i}$, $\ovv{i-1}\,\tupp{m}{M}\,\unn{i}$ and $\textbf{q}$, respectively. On the left, we have the case where \( j > i \), and on the right, the case where \( j = i \). Note that for the latter case, $\ovv{j}\,\unn{i}$ corresponds to the single letter $i$. Denoting by $\gamma$ the permutation associated to the factor $\ovv{i-1}\,\tupp{m}{M}\,\unn{i}$, we have $\gamma=(i\ m)(i\ M+1)$. Thus, $\sigma=\sigma_{\textbf{p}} (j+1\cdots i)\gamma \sigma_{\textbf{q}}$, where $\sigma_{\textbf{p}},\sigma_{\textbf{q}}\in \sym{n+1}$ are the permutations associated to the words $\textbf{p}$ and $\textbf{q}$, respectively.
	\begin{figure}[h]
		\centering
		\includegraphics[scale=0.7]{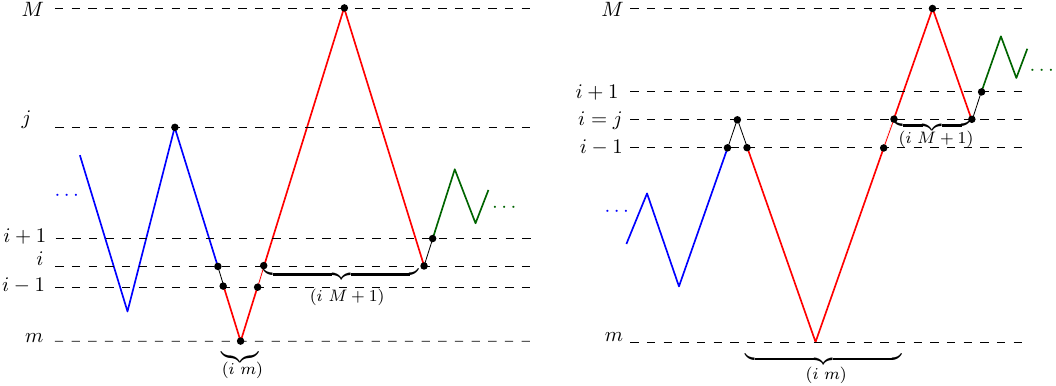}
		\caption{Possible line diagram of the word $\textbf{p}\, \ovv{j}\, \unn{m}\, \ovv{M}\, \unn{i}\, \textbf{q}$ when $j\geq i$.}
		\label{dp_2}
	\end{figure}
    If we remove the factor \(\ovv{i-1}\tupp{m}{M}\unn{i}\) from \(\textbf{a}\), which corresponds to the removal of the red part of the line diagram, we obtain a word (call it $\textbf{a}'$) formed by consecutive integers, since the left endpoint of \(\textbf{p}\,\ovv{j}\,\unn{i}\) and the right endpoint of \(\textbf{q}\) are \(i\) and \(i+1\), respectively. This word can be written as $\textbf{p}(j\, j-1\cdots i)\textbf{q}$, if $j>i$ or $\textbf{p}\ i\ \textbf{q}$ if $j=i$ (see Figure \ref{dp_3}). \par 
    \begin{figure}[h]
    	\centering
    	\includegraphics[scale=0.6]{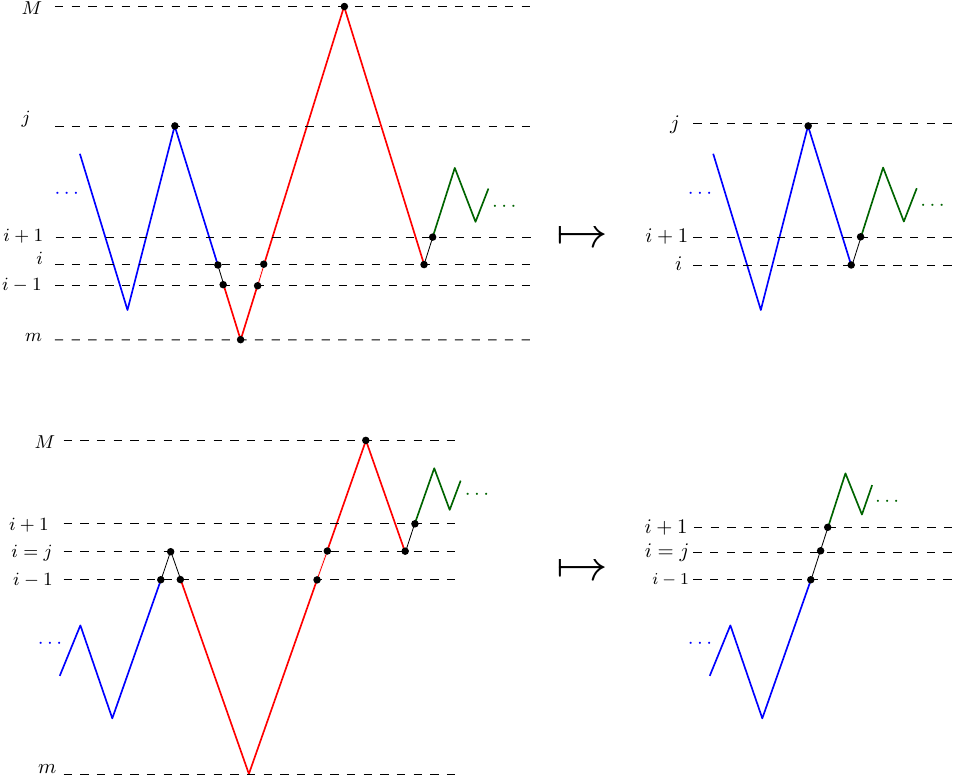}
    	\caption{Line diagrams of $\textbf{a}$ and $\textbf{a}'$.}
    	\label{dp_3}
    \end{figure}
	The fact that $\textbf{a}$ is reduced implies that it cannot contain any factor with repeated or symmetric segments, by Theorem \ref{TheoremEquiv2}. Removing the factor \(\ovv{i-1}\,\tupp{m}{M}\,\unn{i}\), we still obtain a word formed by consecutive integers satisfying the same property. Thus, $\textbf{a}'\in \atoms{\pi}$ for some $\pi \in \sym{n+1}$, by Theorem \ref{TheoremEquiv2}. Moreover, $\ell(\pi)<\ell(\sigma)$.\par 
	Our next goal is to show that $\pi$ does not depend on the choice of $\textbf{a}$. Notice that $\textbf{a}'$ is a reduced word for the permutation $\pi=\sigma_{\textbf{p}}(j+1\,j\cdots i)\sigma_{\textbf{q}}$. Since $\textbf{q}\in [i+1,M-1]^*$, from Lemma \ref{lemma:fixedelts}, the set of non-fixed points of $\sigma_\textbf{q}$ is contained in $[i+1,M]$, which is disjoint from that of $\gamma$, implying that $\sigma_\textbf{q}$ and $\gamma$ commute. Since $\sigma=\sigma_{\textbf{p}}(j+1\,j\cdots i)\gamma \sigma_\textbf{q}$, we have $\sigma \gamma^{-1}=\pi$. Note that $\gamma$ depends exclusively on $\sigma(i)$, $\sigma(m)$ and $\sigma(M+1)$, which consequently depends on $\sigma$. Thus, $\pi$ only depends on $\sigma$, as well. \par
	We are now in conditions to use the strategy mentioned in the beginning of the proof. Define the following map: 
	\begin{align*}
		\varphi: \atoms{\sigma}&\to \atoms{\sigma\gamma^{-1}}\\ \textbf{a}=\textbf{p}\ovv{j}\tupp{m}{M}\unn{i}\textbf{q}&\mapsto \varphi(\textbf{a})=\begin{cases}
		\textbf{p}(j\,j-1\cdots i)\textbf{q}, & \text{ if } j>i\\
		\textbf{p}\ i\  \textbf{q}, & \text{ if }j=i.
		\end{cases}.
	\end{align*}
	We have already shown that $\varphi$ is well-defined. To show that it is injective, suppose that $\textbf{a},\textbf{b}\in \atoms{\sigma}$ such that $\textbf{a}\neq \textbf{b}$. From Lemma \ref{nonosc}, $\textbf{a}=\textbf{p}\,\ovv{j}\,\tupp{m}{M}\,\unn{i}\,\textbf{q}$ and $\textbf{b}=\textbf{p}'\,\ovv{j}\,\tupp{m}{M}\,\unn{i}\textbf{q}'$, for some words $\textbf{p},\textbf{p}' \in [m+1,j-1]^*$ and $\textbf{q},\textbf{q}'\in [i+1,M-1]^*$. Since $\textbf{a}\neq \textbf{b}$ we have either $\textbf{p}\neq \textbf{p}'$ or $\textbf{q}\neq \textbf{q}'$. Given the restrictions of the words $\textbf{p},\textbf{p}',\textbf{q}$ and $\textbf{q}'$, we have $\varphi(\textbf{a})\neq\varphi(\textbf{b})$, showing that $\varphi$ is injective. Therefore, $|\atoms{\sigma}|\leq |\atoms{\sigma \gamma^{-1}}|$. 
\end{proof}

We have now all the ingredients to prove the main result of this paper.

\begin{theorem}
	Let $n\geq 1$. Then, for all $\sigma\in \sym{n+1}$ we have $|\atoms{\sigma}|\leq 4$.   \label{bound}
\end{theorem}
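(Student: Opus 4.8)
The plan is to combine the two cases already separated in Section~3 by an induction on $\ell(\sigma)$. For the base case, if $\ell(\sigma)=1$ then $\sigma=(m\;m+1)$ has a single reduced word, so $|\atoms{\sigma}|\le 1\le 4$. For the inductive step, fix $\sigma\in\sym{n+1}$, assume the bound holds for all permutations of strictly smaller length, and distinguish whether $\atoms{\sigma}$ contains an oscillation or not.

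First I would dispose of the oscillation case: if $\atoms{\sigma}$ contains an oscillation, then Theorem~\ref{maxosc} gives $|\atoms{\sigma}|\le 4$ outright, with no appeal to the induction hypothesis. So assume $\atoms{\sigma}$ contains no oscillation; in particular every word in $\atoms{\sigma}$ is a non-oscillation. If $\atoms{\sigma}=\varnothing$ there is nothing to prove, so assume $|\atoms{\sigma}|>0$. By Lemma~\ref{lemma:endpoints} either $\sigma(M+1)=m$ or $\sigma(m)=M+1$; using the bijection $\textbf{a}\mapsto\textbf{a}^r$ between $\atoms{\sigma}$ and $\atoms{\sigma^{-1}}$ (which preserves lengths and swaps the two cases, as noted after Lemma~\ref{nonosc}), we may assume $\sigma(M+1)=m$. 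By Lemma~\ref{nonosc} there are fixed integers $i\ne m$ and $j\ne M$ with $\sigma(m)=j+1$ and $\sigma(i)=M+1$, and every word in $\atoms{\sigma}$ has the form $\textbf{p}\,\ovv{j}\,\unn{m}\,\ovv{M}\,\unn{i}\,\textbf{q}$.

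Now split on $j<i$ versus $j\ge i$. If $j<i$, Proposition~\ref{p1} gives $|\atoms{\sigma}|=1\le 4$ directly. If $j\ge i$, Proposition~\ref{p2} produces $\pi\in\sym{n+1}$ with $\ell(\pi)<\ell(\sigma)$ and $|\atoms{\sigma}|\le|\atoms{\pi}|$; by the induction hypothesis $|\atoms{\pi}|\le 4$, hence $|\atoms{\sigma}|\le 4$. This exhausts all cases and closes the induction, so the bound holds for every $\sigma\in\sym{n+1}$.

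The work here is essentially bookkeeping: the genuinely hard lemmas (Proposition~\ref{osc2}, Theorem~\ref{maxosc}, Lemma~\ref{nonosc}, and especially Propositions~\ref{p1} and~\ref{p2}) are already in hand, so the only thing to get right is the case analysis and confirming that the inductive hypothesis is invoked only in the one case (namely $j\ge i$, non-oscillation) where the reduction $\sigma\mapsto\pi$ strictly decreases length. The main subtlety to watch is that the $\sigma\leftrightarrow\sigma^{-1}$ reduction is legitimate, i.e.\ that $m$ and $M+1$ remain the extreme non-fixed points of $\sigma^{-1}$ and that reversing words carries $\atoms{\sigma}$ bijectively onto $\atoms{\sigma^{-1}}$ while taking non-oscillations to non-oscillations — all of which is recorded in the discussion preceding this theorem.
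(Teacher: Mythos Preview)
Your proof is correct and follows essentially the same route as the paper's own proof: induction on $\ell(\sigma)$, with the oscillation case handled by Theorem~\ref{maxosc} and the non-oscillation case reduced (via the reverse-word/$\sigma\leftrightarrow\sigma^{-1}$ symmetry) to $\sigma(M+1)=m$, then split into $j<i$ (Proposition~\ref{p1}) and $j\ge i$ (Proposition~\ref{p2} plus the inductive hypothesis). The only cosmetic difference is that you invoke Lemma~\ref{lemma:endpoints} and the inverse bijection to normalize to $\sigma(M+1)=m$ before applying Lemma~\ref{nonosc}, whereas the paper quotes the two displayed forms from Lemma~\ref{nonosc} and then notes the second reduces to the first by word reversal; these are the same reduction phrased in a different order.
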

\begin{proof}
	We use induction on $\ell(\sigma)$. If $\ell(\sigma)=1$, then $\sigma$ contains only one reduced word and the result follows. \par 
	Let $\sigma \in \sym{n+1}$ and suppose that $|\atoms{\pi}|\leq 4$ for every $\pi\in \sym{n+1}$ satisfying $\ell(\pi)< \ell(\sigma)$. We may assume that $|\atoms{\sigma}|>0$, otherwise the result is trivially satisfied. Consider $m$ and $M+1$ to be the minimum and maximum non-fixed points of $\sigma$, respectively. If $\atoms{\sigma}$ contains an oscillation, then $|\atoms{\sigma}|\leq 4$ by Theorem \ref{maxosc} and we are done. Otherwise, if $\textbf{a}\in \atoms{\sigma}$ and $\textbf{a}$ is not an oscillation, then there are words $\textbf{p} \in [m+1,j-1]^*$, $\textbf{q}\in [i+1,M-1]^*$ and integers $i\neq m,j\neq M$ such that
	$$\textbf{a}=\textbf{p}\,\ovv{j}\,\unn{m}\,\ovv{M}\,\unn{i}\,\textbf{q}$$ or
	$$\textbf{a}= \textbf{q}\,\,\unn{i}\,\ovv{M}\,\unn{m}\,\ovv{j}\,\textbf{p},$$ by Lemma \ref{nonosc}. It is enough to consider the first case, since the second case follows from the first one applying the reverse word. If $j<i$, then $|\atoms{\sigma}|=1<4$, by Proposition \ref{p1}. If $j\geq i$, then there is $\pi\in\sym{n+1}$ such that $|\atoms{\sigma}|\leq |\atoms{\pi}|$ with $\ell(\pi)<\ell(\sigma)$, by Proposition \ref{p2}. Using the induction hypothesis, $|\atoms{\pi}|\leq 4$ and therefore $|\atoms{\sigma}|\leq 4$.
	\end{proof}

\section{An application}
The bound obtained in Theorem \ref{bound} will help us prove a conjecture proposed in \cite{Elder} that states the following.
\begin{conjecture}[Conjecture 7.2 of \cite{Elder}]
	For all $\sigma \in \sym{n+1}$, $\displaystyle0\leq |C(\sigma)|\leq\frac{1}{2}|R(\sigma)|+1$. \label{conj}
\end{conjecture}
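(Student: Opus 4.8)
The plan is to deduce the conjecture from Theorem~\ref{bound} by an averaging/counting argument over commutation classes, combined with an induction on $\ell(\sigma)$. The key observation is that the quantity $|\atoms{\sigma}|$ is exactly the number of commutation classes of $\sigma$ that contain only a single reduced word; every other commutation class contributes at least $2$ to $|R(\sigma)|$. Writing $a := |\atoms{\sigma}|$ and $c := |C(\sigma)|$, the trivial bound is $|R(\sigma)| \geq a + 2(c - a) = 2c - a$, hence $c \leq \tfrac12(|R(\sigma)| + a)$. By Theorem~\ref{bound}, $a \leq 4$, which only gives $c \leq \tfrac12|R(\sigma)| + 2$ — off by one from the target. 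So the first step is this easy inequality, and the second, harder step is to recover the missing $+1$.

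To close the gap I would argue that the four-atom bound is only attained in a rigid situation where $|R(\sigma)|$ is large enough to absorb the slack. Concretely, I would first dispose of small cases ($\ell(\sigma) \leq 2$, or $\sigma$ the identity, where $C(\sigma) = R(\sigma)$ has size $0$ or $1$ and the bound is immediate). Then I would split on the value of $a = |\atoms{\sigma}|$. If $a \leq 2$, the inequality $c \leq \tfrac12(|R(\sigma)| + 2) = \tfrac12|R(\sigma)| + 1$ is already what we want. If $a \in \{3, 4\}$, I would show that $\sigma$ necessarily has at least one commutation class with at least $a - 1$ reduced words — or more precisely, that $|R(\sigma)| \geq 2c - 2$ — by a refined count: at least one non-atomic commutation class must be ``large'', or there are enough non-atomic classes, so that the deficit of $2$ coming from the atoms is compensated. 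An alternative and probably cleaner route for the $a \geq 3$ case is an induction using Proposition~\ref{p1} and Proposition~\ref{p2}: when $\atoms{\sigma}$ contains a non-oscillation with $j \geq i$, the map $\varphi$ of Proposition~\ref{p2} together with an analogous map on all of $R(\sigma)$ (removing the same factor $\ovv{i-1}\,\tupp{m}{M}\,\unn{i}$) relates $R(\sigma)$ and $C(\sigma)$ to $R(\pi)$ and $C(\pi)$ for a shorter $\pi$, letting the inductive hypothesis $|C(\pi)| \leq \tfrac12|R(\pi)| + 1$ pass through; the oscillation case and the $j < i$ case are handled directly since there $|\atoms{\sigma}|$ is small or the structure of $R(\sigma)$ is very constrained.

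The main obstacle I anticipate is the $a = 4$ case: showing that a permutation with four one-element commutation classes has $|R(\sigma)| \geq 2|C(\sigma)| - 2$, i.e.\ that not all of the remaining $c - 4$ commutation classes can be as small as size $2$ while the total reduced word count stays tight. Here I would lean on the structural description of the four atoms in Theorem~\ref{maxosc} (the four words distinguished by which of $m, M$ sits at which endpoint, all oscillations), use that $\sigma$ then has a very specific ``double-staircase'' shape, and exhibit explicitly a commutation class with more than two reduced words, or show the presence of an available braid move producing extra words — for instance, whenever an oscillation has both a strict vee and enough segments, a braid relation at an internal spike generates a sibling reduced word in a different, non-singleton commutation class. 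Making this last step uniform across all $\sigma$ with $a = 4$ is the delicate part; if a fully structural argument proves unwieldy, the fallback is the inductive scheme of the previous paragraph, where the length-decreasing maps $\varphi$ do the bookkeeping and the base cases are finite and checkable.
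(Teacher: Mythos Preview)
Your counting step $|R(\sigma)| \geq 2|C(\sigma)| - |\atoms{\sigma}|$ is correct and matches the paper, as does the diagnosis that one must squeeze out the extra $+1$. But both routes you propose for closing the gap have problems. The inductive scheme via $\varphi$ cannot work as stated: the map $\varphi$ of Proposition~\ref{p2} deletes a specific contiguous factor $\ovv{i-1}\,\tupp{m}{M}\,\unn{i}$ from an atom, and this factor is present only because atoms are words formed by consecutive integers. A generic reduced word of $\sigma$ has no such contiguous factor, so there is no ``analogous map on all of $R(\sigma)$'' through which to transport $|R(\sigma)|$ and $|C(\sigma)|$ down to $\pi$. (Propositions~\ref{p1} and~\ref{p2} are used only in the proof of Theorem~\ref{bound}; the paper does not revisit them for the conjecture.) Your other route --- exhibit one large commutation class --- is the right idea, but you need a class of size at least $a$, not $a-1$, for the arithmetic to close, and the plan to braid ``at an internal spike'' of an oscillation is not specific enough to guarantee that many commutations afterward.

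The paper's argument differs in its case split: it does not split on the value of $a$, but on whether $M - m \geq 4$. When $M - m \geq 4$, $a > 0$, and the atom $\textbf{a}$ is not the single segment $\tupp{m}{M}$ or $\tdownn{M}{m}$, then $\textbf{a}$ necessarily contains a factor of the shape $(m{+}1)\,m\,(m{+}1)(m{+}2)(m{+}3)(m{+}4)$ (or its mirror at the $M$ end). Applying the braid $(m{+}1)m(m{+}1) \to m(m{+}1)m$ and then commuting the trailing $m$ past $(m{+}2)(m{+}3)(m{+}4)$ yields four distinct words in one commutation class. The refined count is then $|R(\sigma)| \geq a + 4 + 2(|C(\sigma)| - a - 1) = 2|C(\sigma)| - a + 2$, and $a \leq 4$ finishes. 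When $M - m < 4$, the shift bijection reduces the question to $\sym{k}$ with $k \leq 5$, which is checked directly. Note in particular that this large-class construction works for \emph{any} $a \geq 1$, so no separate structural analysis of the four-oscillation situation is needed.
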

 Note that \( C(\sigma) \) is the disjoint union of the set of commutation classes with more than one reduced word and the set of one-element commutation classes, that is, 
 \begin{align}
 	C(\sigma)=\{[\textbf{a}]\in C(\sigma):|[\textbf{a}]|>1\}\cup \{[\textbf{a}]\in C(\sigma):|[\textbf{a}]|=1\}. \label{disjoint}
 \end{align} Clearly, $|\{[\textbf{a}]\in C(\sigma):|[\textbf{a}]|=1\}|=|\atoms{\sigma}|$. Since the set $C(\sigma)$ partitions \( R(\sigma) \), from \eqref{disjoint} we have at least $2|C(\sigma)\setminus \atoms{\sigma}|+|\atoms{\sigma}|$ different reduced words, implying that 

 \[
 2|C(\sigma)| - |\atoms{\sigma}| \leq |R(\sigma)|,
 \]
 which simplifies to 
  \begin{align}
  |C(\sigma)| \leq \frac{1}{2} \left( |R(\sigma)| + |\atoms{\sigma}| \right).
  \label{inequality}
 \end{align}

  From Theorem \ref{bound}, $|\atoms{\sigma}|\leq 4$ and we get $$|C(\sigma)|\leq \frac{1}{2}|R(\sigma)|+2,$$ which almost proves Conjecture \ref{conjecture}. For us to have a ``+1'' instead, we need the following.

\begin{proposition}
	Let $\sigma \in \sym{n+1}$ with $m$ and $M+1$ the minimum and maximum non-fixed points of $\sigma$, respectively. Suppose that $|\atoms{\sigma}|>0$ and $\tupp{m}{M},\tdownn{M}{m}\notin \atoms{\sigma}$. If $M-m\geq 4$, then there is $[\textbf{a}]\in C(\sigma)$ such that $|[\textbf{a}]|\geq 4$.  	
\end{proposition}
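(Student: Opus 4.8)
The plan is to produce a single commutation class of $\sigma$ with at least four reduced words, by exhibiting a reduced word in which four distinct commutations can be performed independently (or nearly so), each one giving a new word in the same class. Since $|\atoms{\sigma}|>0$ we may pick $\textbf{a}\in\atoms{\sigma}$; by the bijection $\textbf{a}\mapsto\textbf{a}^r$ and the symmetry between the cases $\sigma(M+1)=m$ and $\sigma(m)=M+1$ (Lemma~\ref{lemma:endpoints}), we may assume $\sigma(M+1)=m$. The hypothesis $\tupp{m}{M}\notin\atoms{\sigma}$ means $\textbf{a}$ is not the oscillation whose left endpoint is $m$; so either $\textbf{a}$ is a non-oscillation, or it is an oscillation with a different endpoint. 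In all cases Lemma~\ref{lemma:endpoints} and Lemma~\ref{nonosc} give us a fairly rigid description of $\textbf{a}$: it contains a segment $\tupp{m}{M}$, and there is content on at least one side of it (the words $\textbf{p}$, $\textbf{q}$ or the extra spikes) forcing $M-m$ letters worth of ``climb'' plus some detour.

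\textbf{Key steps.}
First I would locate inside $\textbf{a}$ the factor $i(i+1)\cdots j$ (or its reverse) coming from the long segment $\tupp{m}{M}$, which, because $M-m\geq4$, is a monotone run of at least five consecutive integers $m,m+1,\ldots,M$. Such a long monotone run is the source of commutations: in the reduced word $\textbf{a}$, take the letters $m,m+1,M-1,M$ appearing (in this order along the run) and observe that each of the four ``turning configurations'' at the two ends of this run can be locally reshuffled. More precisely, I would argue that near the left end of the segment $\textbf{a}$ looks like $\cdots x\,m\,(m+1)\cdots$ with $x=m+2$ forced (since $\textbf{a}$ is formed by consecutive integers and $m$ is a vale, its neighbours are $m+1$ and, on the other side, $m+1$ again — but $m$ cannot repeat consecutively, so on the side away from the segment the neighbour is $m+1$, hence two letters out one reaches $m+2$), and the factor $(m+2)(m+1)m(m+1)(m+2)$ or similar contains a commutable pair because $|(m+2)-m|=2>1$ is \emph{not} commutable — so instead I would look for genuine non-adjacent pairs created by the detour structure. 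The cleanest route: use Lemma~\ref{nonosc} to write $\textbf{a}=\textbf{p}\,\ovv{j}\,\unn{m}\,\ovv{M}\,\unn{i}\,\textbf{q}$ in the non-oscillation case (the oscillation subcases being easier and handled separately), and then exhibit four independent commutations: one inside the passage from $\textbf{p}$ into $\ovv j$, one at the vale $m$, one at the pinnacle $M$, one at the vale $i$, each legitimate because the relevant spike's two neighbours differ by $\geq 2$ once $M-m\geq4$ opens enough room. Performing any nonempty subset of these four commutations yields a distinct word, all in $[\textbf{a}]$; I only need four of the $2^4$ resulting words, so it suffices that the four commutations are \emph{available} in $\textbf{a}$ itself (each gives one new word, plus $\textbf{a}$ makes at least five, certainly at least four).

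\textbf{Main obstacle.}
The delicate part is the bookkeeping that guarantees at least four genuinely available commutations \emph{simultaneously present in one word}, uniformly across the several shapes $\textbf{a}$ can take (non-oscillation, oscillation ending at $M$, etc.), using only $M-m\geq4$ and the exclusion of the two extreme oscillations. The bound $M-m\geq 4$ is exactly what is needed so that the long segment $\tupp{m}{M}$ has interior letters $m+1,\ldots,M-1$ available to form commutable pairs like $(m)(m+2)$-type factors once a neighbouring segment turns back; checking that the two forbidden oscillations are precisely the words where this mechanism fails — i.e. where the segments are so monotone that no pair of non-adjacent letters ever becomes a factor — is where the hypothesis $\tupp{m}{M},\tdownn{M}{m}\notin\atoms{\sigma}$ gets used. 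I expect the proof to proceed by: (i) reduce to $\sigma(M+1)=m$; (ii) if $\textbf{a}$ is a non-oscillation, apply Lemma~\ref{nonosc} and display the four commutations explicitly near the four spikes $\ovv j,\unn m,\ovv M,\unn i$; (iii) if $\textbf{a}$ is an oscillation but not $\tupp mM$, note its endpoint is $M$ (left or right) by Lemma~\ref{endpoints}, write $\textbf{a}$ via Lemma~\ref{lemma:endpoints}, and find the commutations among the interior of the two long segments forced by $M-m\geq4$; (iv) conclude that the commutation class of $\textbf{a}$ contains $\textbf{a}$ together with the (at least four) words obtained by a single commutation, hence has size $\geq 4$ — adjusting to $\geq 4$ rather than $\geq 5$ only if one of the ``new'' words coincides with another, which a short case check rules out or absorbs.
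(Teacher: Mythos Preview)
Your proposal has a fundamental flaw that sinks the whole strategy: you pick $\textbf{a}\in\atoms{\sigma}$ and then try to locate commutations inside $\textbf{a}$. But by definition a one-element commutation class is a reduced word in which \emph{no} commutation can be applied; $[\textbf{a}]=\{\textbf{a}\}$. So there are zero commutations available in $\textbf{a}$, not four. Every factor of length two in $\textbf{a}$ is of the form $i(i\pm1)$, and your search for ``four independent commutations \ldots near the four spikes $\ovv{j},\unn{m},\ovv{M},\unn{i}$'' or ``among the interior of the two long segments'' is guaranteed to come up empty. Step~(iv) of your outline, where you conclude that $[\textbf{a}]$ contains $\textbf{a}$ together with at least four words obtained by single commutations, directly contradicts the fact that $|[\textbf{a}]|=1$.

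What the paper does instead is subtle but crucial: starting from $\textbf{a}\in\atoms{\sigma}$ written (in the case $\sigma(M+1)=m$, with $\textbf{p}$ nonempty) as
\[
\textbf{a}=\textbf{p}'\,(m{+}1)\,m\,(m{+}1)(m{+}2)(m{+}3)(m{+}4)\cdots M\,\textbf{q},
\]
it first applies a \emph{braid relation} $(m{+}1)m(m{+}1)\to m(m{+}1)m$ to produce a \emph{different} reduced word $\textbf{a}'$ lying in a \emph{different} commutation class. In $\textbf{a}'$ the rightmost $m$ now sits immediately to the left of $(m{+}2)(m{+}3)(m{+}4)$, and since $|m-(m{+}k)|\geq 2$ for $k\geq 2$, this $m$ can be commuted past each of these three letters in turn, yielding three further words in $[\textbf{a}']$. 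Thus $|[\textbf{a}']|\geq 4$. The hypothesis $M-m\geq 4$ is exactly what ensures the run $(m{+}2)(m{+}3)(m{+}4)$ exists, and the hypothesis $\tupp{m}{M},\tdownn{M}{m}\notin\atoms{\sigma}$ is what forces $\textbf{p}$ or $\textbf{q}$ to be nonempty, supplying the $(m{+}1)$ needed for the braid. The large commutation class is $[\textbf{a}']$, not $[\textbf{a}]$.
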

\begin{proof}
	Since \( |\atoms{\sigma}| > 0 \), Lemma \ref{lemma:endpoints} implies that either \( \sigma(m) = M+1 \) or \( \sigma(M+1) = m \). Assume the latter holds (the other case is analogous), and let \( \textbf{a} \in \atoms{\sigma} \). Again from Lemma \ref{lemma:endpoints}, we can write $\textbf{a}$ as
 $$\textbf{a}=\textbf{p}\,\tupp{m}{M}\,\textbf{q}$$ for some words $\textbf{p}\in [m+1,M]^*$ and $\textbf{q}=[m,M-1]^*$. By hypothesis, \( \textbf{a} \neq \tupp{m}{M} \), and thus at least one of the words \( \textbf{p} \) or \( \textbf{q} \) is not empty. If \( \textbf{p} \) is not empty (the other case is similar), the fact that \( M - m \geq 4 \) implies that  
 \[
 \textbf{a} = \textbf{p}'\textcolor{red}{(m+1)\ m(m+1)}(m+2)(m+3)(m+4)\cdots M\ \textbf{q},
 \]
 for some word \( \textbf{p}' \). Performing a braid relation using the red factor of $\textbf{a}$, we obtain the word $$\textbf{a}'=\textbf{p}'\textcolor{red}{m\  (m+1)m}(m+2)(m+3)(m+4)\cdots M\ \textbf{q}.$$ But then, the rightmost letter \( \textcolor{red}{m} \) can commute with the factor \( (m+2)(m+3)(m+4) \) in \( \textbf{a}' \), generating three additional reduced words that belong to \( [\textbf{a}'] \). These can be obtained from \( \textbf{a}' \) in the following way:
	\begin{align*}
		\textbf{a}'&=\textbf{p}'\textcolor{red}{m\  (m+1)m}(m+2)(m+3)(m+4)\cdots M\ \textbf{q}\\
		&\sim \textbf{p}'\textcolor{red}{m\  (m+1)}(m+2)\textcolor{red}{m}(m+3)(m+4)\cdots M\ \textbf{q} \\
		&\sim \textbf{p}'\textcolor{red}{m\  (m+1)}(m+2)(m+3)\textcolor{red}{m}(m+4)\cdots M\ \textbf{q}\\
		&\sim \textbf{p}'\textcolor{red}{m\  (m+1)}(m+2)(m+3)(m+4)\textcolor{red}{m}\cdots M\ \textbf{q}.
	\end{align*}
	 Therefore, $|[\textbf{a}']|\geq 4$ and we have the result.
\end{proof}
Before proceeding to the proof of Conjecture \ref{conj}, we need the following definition.
\begin{definition}[\cite{Tenner2005ReducedDA}]
	Let $\textbf{a}=i_1i_2\cdots i_l\in R(\sigma)$. For $k\geq 1$ a positive integer, the \textit{shift} of $\textbf{a}$ by $k$ is the word $$\textbf{a}^k:=(i_1+k)(i_2+k)\cdots (i_l+k)\in R((1,\ldots,k,\sigma(1)+k,\sigma(2)+k,\ldots,\sigma(n+1)+k)).$$
\end{definition}
Letting $\pi=(1,\ldots,k,\sigma(1)+k,\sigma(2)+k,\ldots,\sigma(n+1)+k)\in \sym{n+1+k}$, we have that the map $\textbf{a}\mapsto \textbf{a}^k$ is a bijection between $R(\sigma)$ and $R(\pi)$, which also extends naturally to a bijection between $C(\sigma)$ and $C(\pi)$. 

\begin{theorem}
	For all $\sigma \in \sym{n+1}$, we have $\displaystyle0\leq |C(\sigma)|\leq\frac{1}{2}|R(\sigma)|+1$.  \label{conjecture}	
\end{theorem}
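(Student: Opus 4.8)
The plan is to combine inequality \eqref{inequality}, namely $|C(\sigma)| \leq \tfrac{1}{2}(|R(\sigma)| + |\atoms{\sigma}|)$, with a case analysis on the value of $|\atoms{\sigma}|$. By Theorem \ref{bound} we have $|\atoms{\sigma}| \in \{0,1,2,3,4\}$. When $|\atoms{\sigma}| \leq 2$, inequality \eqref{inequality} gives $|C(\sigma)| \leq \tfrac{1}{2}|R(\sigma)| + 1$ directly. So the only problematic cases are $|\atoms{\sigma}| = 3$ and $|\atoms{\sigma}| = 4$; in these cases \eqref{inequality} yields the weaker bounds $\tfrac12|R(\sigma)| + \tfrac32$ and $\tfrac12|R(\sigma)| + 2$, and I need to sharpen the count by at least $1$ (resp. $2$). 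The idea for sharpening is that \eqref{inequality} was obtained by bounding each non-singleton commutation class from below by $2$; if I can exhibit a single commutation class $[\textbf{a}]$ with $|[\textbf{a}]| \geq 4$, then the total reduced word count is at least $2|C(\sigma) \setminus \atoms{\sigma}| + |\atoms{\sigma}| + 2$, which improves \eqref{inequality} to $|C(\sigma)| \leq \tfrac12(|R(\sigma)| + |\atoms{\sigma}|) - 1$, and with $|\atoms{\sigma}| = 4$ this is exactly $\tfrac12|R(\sigma)| + 1$. (For $|\atoms{\sigma}| = 3$ I only need one extra, so a single non-singleton class of size $\geq 3$ suffices; but having one of size $\geq 4$ covers both.)

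So the crux is: if $|\atoms{\sigma}| \geq 3$, produce a commutation class of size at least $4$. The preceding Proposition already does most of this work: it says that if $|\atoms{\sigma}| > 0$, $\tupp{m}{M}, \tdownn{M}{m} \notin \atoms{\sigma}$, and $M - m \geq 4$, then such a class exists. So I first dispatch the degenerate hypotheses of that Proposition. If $M - m \leq 3$, then every reduced word lies in $[m, M]^*$ with $M - m \leq 3$, a bounded alphabet; here $|\atoms{\sigma}| \geq 3$ forces $\sigma$ into a short finite list of permutations (up to the shift bijection $\textbf{a} \mapsto \textbf{a}^k$, which preserves both $|C|$ and $|R|$, so I may assume $m = 1$, $M \in \{1,2,3,4\}$), and I verify the inequality for each by direct inspection. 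If one of $\tupp{m}{M}$ or $\tdownn{M}{m}$ is itself in $\atoms{\sigma}$, then (by Lemma \ref{lemma:endpoints}) this word is an oscillation, so by Proposition \ref{osc2} \emph{all} of $\atoms{\sigma}$ consists of oscillations, and in fact $R(\sigma) = \{\tupp{m}{M}\}$ or $R(\sigma) = \{\tdownn{M}{m}\}$ as shown inside the proof of Proposition \ref{osc2} (the case $\textbf{q}$ empty), whence $|\atoms{\sigma}| = 1 < 3$ — contradiction — so this case does not arise when $|\atoms{\sigma}| \geq 3$. Actually more care is needed: $\tupp{m}{M} \in \atoms{\sigma}$ only forces the particular permutation $(m\ M{+}1)$-type element; I should instead argue directly that if $\tupp{m}{M} \in \atoms{\sigma}$ then $\sigma$ has this oscillation as an endpoint-$m$-and-endpoint-$M$ word simultaneously, and invoke Lemma \ref{endpoints2} to bound $|\atoms{\sigma}|$, or simply observe $\tupp{m}{M} = m(m{+}1)\cdots M$ admits no commutation or braid move so it forces $R(\sigma)$ to be a singleton.

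With the degenerate cases handled, the remaining case $|\atoms{\sigma}| \geq 3$, $M - m \geq 4$, $\tupp{m}{M}, \tdownn{M}{m} \notin \atoms{\sigma}$ falls under the Proposition, giving a class $[\textbf{a}]$ with $|[\textbf{a}]| \geq 4$, and the counting argument above finishes. Let me assemble it: writing $C' := \{[\textbf{a}] \in C(\sigma) : |[\textbf{a}]| > 1\}$, so $C(\sigma) = C' \sqcup \atoms{\sigma}$ and $|C(\sigma)| = |C'| + |\atoms{\sigma}|$, and summing cardinalities over $C(\sigma)$ which partitions $R(\sigma)$,
\[
|R(\sigma)| \;=\; \sum_{[\textbf{a}] \in C'} |[\textbf{a}]| \;+\; |\atoms{\sigma}| \;\geq\; 2(|C'| - 1) + 4 + |\atoms{\sigma}| \;=\; 2|C'| + |\atoms{\sigma}| + 2,
\]
using that all but one class in $C'$ contribute at least $2$ and the distinguished one contributes at least $4$. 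Hence $|C(\sigma)| = |C'| + |\atoms{\sigma}| \leq \tfrac12(|R(\sigma)| - |\atoms{\sigma}| - 2) + |\atoms{\sigma}| = \tfrac12 |R(\sigma)| + \tfrac12|\atoms{\sigma}| - 1 \leq \tfrac12|R(\sigma)| + 1$ since $|\atoms{\sigma}| \leq 4$. The lower bound $0 \leq |C(\sigma)|$ is immediate. The main obstacle I anticipate is bookkeeping the small cases $M - m \leq 3$ cleanly — I expect that for such small alphabets $|\atoms{\sigma}|$ is in fact always $\leq 2$ (so inequality \eqref{inequality} already suffices with no extra work), which would let me skip the casework entirely, but confirming that claim requires a short argument about how many one-element commutation classes are possible in $[1,2]^*$, $[1,3]^*$, $[1,4]^*$; alternatively one checks the handful of relevant $\sigma$ by hand. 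Everything else is a direct splice of the results already proved.
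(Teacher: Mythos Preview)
Your proposal is correct and follows essentially the same route as the paper: both arguments use inequality \eqref{inequality}, both dispose of the case $\tupp{m}{M}\in\atoms{\sigma}$ or $\tdownn{M}{m}\in\atoms{\sigma}$ by noting that then $|R(\sigma)|=1$, both invoke the preceding Proposition when $M-m\ge 4$ to produce a commutation class of size $\ge 4$ and then run exactly your counting $|R(\sigma)|\ge 2(|C'|-1)+4+|\atoms{\sigma}|$, and both handle $M-m\le 3$ by the shift bijection $\textbf{a}\mapsto\textbf{a}^{m-1}$ reducing to $\sym{k}$ with $k\le 5$ and checking directly. Your organizational split on $|\atoms{\sigma}|\le 2$ versus $\ge 3$ is a cosmetic variant of the paper's split on $|\atoms{\sigma}|=0$ versus $>0$.

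One correction: your hope that $|\atoms{\sigma}|\le 2$ whenever $M-m\le 3$ is false---the longest element of $\sym{4}$ already has $|\atoms{w_0}|=4$ (see Table~\ref{table:number_of_permutations_per_number_of_oec}, row $n=3$)---so the small-case check cannot be skipped. The paper does it computationally for all $\sigma\in\sym{k}$, $k<6$, which is exactly your fallback ``check the handful of relevant $\sigma$ by hand.''
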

\begin{proof}
	If $|\atoms{\sigma}|=0$, then every commutation class for $\sigma$ has at least two distinct elements, and thus $\displaystyle|C(\sigma)|\leq \frac{1}{2}|R(\sigma)|<\frac{1}{2}|R(\sigma)|+1$, by \eqref{inequality}. \par 
	Suppose that $|\atoms{\sigma}|>0$ and let $m$ and $M+1$ be the minimum and maximum non-fixed points of $\sigma$, respectively. If either \( \tupp{m}{M} \in \atoms{\sigma} \) or \( \tdownn{M}{m} \in \atoms{\sigma} \), then there is only one reduced word for \( \sigma \) (that is, $|R(\sigma)| = 1$) which will be one of previous, and consequently only one commutation class, implying that \( \displaystyle |C(\sigma)| = |R(\sigma)| \leq \frac{1}{2}|R(\sigma)| + 1 \). Otherwise, if \( \tupp{m}{M} \notin \atoms{\sigma} \) and \( \tdownn{M}{m} \notin \atoms{\sigma} \), then we need to consider two separate cases.\\
	\fbox{\underline{Case 1}: $M-m\geq 4$}\\
	From the previous proposition, there is $[\textbf{a}]\in C(\sigma)$ such that $|[\textbf{a}]|\geq 4$. Then, the set $C(\sigma)$ can be written as the disjoint union $$C(\sigma)=\{[\textbf{b}]\in C(\sigma): |[\textbf{b}]|>1,[\textbf{b}]\neq[\textbf{a}]\}\cup \{[\textbf{a}]\}\cup \{[\textbf{b}]\in C(\sigma):|[\textbf{b}]|=1\}.$$ Since $C(\sigma)$ partitions the set $R(\sigma)$, we have at least $2|C(\sigma)\setminus( \{[\textbf{a}]\}\cup \atoms{\sigma})|+4+|\atoms{\sigma}|$ different reduced words. Thus,
	\begin{align*}
		2(|C(\sigma)|-1-|\atoms{\sigma}|) + 4 + |\atoms{\sigma}| \leq |R(\sigma)|
		\Leftrightarrow 2|C(\sigma)|-|\atoms{\sigma}|+2\leq |R(\sigma)|,
	\end{align*}
	 implying that
	$$2|C(\sigma)|\leq |R(\sigma)|+|\atoms{\sigma}|-2.$$
	Since $|\atoms{\sigma}|\leq 4$ by Theorem \ref{bound}, we have $2|C(\sigma)|\leq |R(\sigma)|+2$ which implies the result.\\ 
	\fbox{\underline{Case 2}: $M-m<4$}\\
	Consider $\pi=(\sigma(m)-m+1,\sigma(m+1)-m+1,\ldots,\sigma(M+1)-m+1)$. Given the fact that $m$ and $M+1$ are minimum and maximum non-fixed points of $\sigma$, we have $\{\sigma(m),\sigma(m+1),\ldots,\sigma(M+1)\}=[m,M+1]$, and so $\pi\in \sym{M-m+2}$. Then, the map $\textbf{a}\mapsto \textbf{a}^{m-1}$ is a bijection between $R(\pi)$ and $R(\sigma)$, which extends naturally to a bijection between $C(\pi)$ and $C(\sigma)$. Since $M-m<4$, we have that $\pi\in \sym{k}$ with $k<6$. Therefore, the conjecture holds in this case if it holds for all permutations in $\sym{k}$ with $k<6$, which was checked computationally. 
\end{proof}

\section{Open problems and further directions}
From Theorem \ref{thm1}, a permutation can have at most four one-element commutation classes. However, we were unable to find any permutation with exactly three such classes. If such a permutation exists, it cannot be an involution \cite{OEC}. Using a computer search, we checked all permutations up to 
$n=8$ and found none with exactly three one-element commutation classes. Table \ref{table:number_of_permutations_per_number_of_oec} shows the number of permutations $\sigma \in\sym{n+1}$ up to $n=8$ such that $|\atoms{\sigma}|=i$, with $i\in \{0,1,2,3,4\}$.

	\begin{table}[h]
		\centering
		\begin{tabular}{ c | c | c | c | c | c }
			$n\setminus |\atoms{\sigma}|$ & 0 & 1& 2 & 3 & 4\\
			\hline
			1& 1 & 1 & $-$ & $-$ & $-$ \\  
			2& 1 & 4 & 1 & $-$ & $-$  \\
			3& 5 & 15 & 3 &$-$ &1 \\
			4& 53 & 52 &12 & $-$ & 3\\
			5& 496 & 181 & 34 & $-$ &9\\
			6&4326 & 594 & 97 & $-$ & 23\\
			7& 38124 & 1875 & 261 & $-$ &60\\
			8 & 3609559 & 16937 & 1890 & $-$ & 414
		\end{tabular}
	\caption{Number of permutations containing certain number of one-element commutation classes for some values of $n$.}
	\label{table:number_of_permutations_per_number_of_oec}
	\end{table}

Based on the known results and our computational data, we propose the following conjecture.
\begin{conjecture}
	For all $\sigma \in \sym{n+1}$, we have $|\atoms{\sigma}|\in \{0,1,2,4\}$.
\end{conjecture}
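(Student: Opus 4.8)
The plan is to refine the inductive argument used for Theorem~\ref{bound}. Recall that that proof runs by induction on $\ell(\sigma)$ and splits into the case where $\atoms{\sigma}$ contains an oscillation, the non-oscillation case with $j<i$, and the non-oscillation case with $j\geq i$. Since the case $j<i$ already gives $|\atoms{\sigma}|=1$ (Proposition~\ref{p1}), it suffices to re-examine the remaining two cases and, in each, rule out the value $3$.

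\emph{Non-oscillation, $j\geq i$.} Proposition~\ref{p2} produces an injection $\varphi\colon\atoms{\sigma}\hookrightarrow\atoms{\pi}$ with $\ell(\pi)<\ell(\sigma)$; moreover $\pi$ fixes both $m$ and $M+1$, so its minimum and maximum non-fixed points $m'$, $M'+1$ satisfy $m<m'\leq M'<M$. Assuming the conjecture for $\pi$, we obtain $|\atoms{\sigma}|\leq|\atoms{\pi}|\in\{0,1,2,4\}$, so the only bad case is $|\atoms{\pi}|=4$ and $|\operatorname{Im}\varphi|=3$. I would eliminate it by upgrading $\varphi$ to a bijection: every $\textbf{a}\in\atoms{\sigma}$ has the form $\textbf{p}\,\ovv{j}\,\tupp{m}{M}\,\unn{i}\,\textbf{q}$ with the \emph{same} $i,j$ (Lemma~\ref{nonosc}), and $\textbf{p},\textbf{q}$ are forced to be oscillations with prescribed endpoints (via Lemmas~\ref{lemma:fixedelts}, \ref{lemmaTenner}, \ref{endpoints}); conversely, given $\textbf{b}\in\atoms{\pi}$, one uses Theorem~\ref{TheoremEquiv2} to pin down the segments of $\textbf{b}$ around its spikes $m'$ and $M'$, locates the factor $j(j-1)\cdots i$ (the single letter $i$ when $j=i$), and re-inserts $\ovv{i-1}\,\tupp{m}{M}\,\unn{i}$ there, checking with Theorem~\ref{TheoremEquiv2} that the result lies in $\atoms{\sigma}$ and maps to $\textbf{b}$.

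\emph{Oscillation case.} We may assume $\sigma(M+1)=m$ (the other case follows by reversal). By Proposition~\ref{osc2} every $\textbf{a}\in\atoms{\sigma}$ is an oscillation, hence of the form $\textbf{p}\,\tupp{m}{M}\,\textbf{q}$ with $\textbf{p}\in[m+1,M]^*$ and $\textbf{q}\in[m,M-1]^*$, and by Lemma~\ref{endpoints} it has at least one endpoint in $\{m,M\}$. By Lemma~\ref{endpoints2} each of the four ``slots'' — left endpoint $m$, left endpoint $M$, right endpoint $m$, right endpoint $M$ — hosts at most one element of $\atoms{\sigma}$, so distinct words occupy disjoint nonempty sets of slots; hence $|\atoms{\sigma}|=3$ forces the three words to occupy either three slots (one each) or all four (one word with both endpoints in $\{m,M\}$). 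If that last word has endpoints $(m,M)$ it equals $\tupp{m}{M}$, forcing $R(\sigma)=\{\tupp{m}{M}\}$ and $|\atoms{\sigma}|=1$, a contradiction; the endpoint pairs $(m,m)$, $(M,M)$, $(M,m)$, as well as the three-slot configurations, genuinely arise (for instance $12321\in\atoms{(1\ 4)}$ in $\sym{4}$ is an $(m,m)$-word) and must be handled individually. For each, I would rewrite the defining constraints as $\sigma=\sigma_{\textbf{p}}\,\tau\,\sigma_{\textbf{q}}$, where $\tau$ is the permutation of $\tupp{m}{M}$, analyse the overlapping supports of $\sigma_{\textbf{p}}$ and $\sigma_{\textbf{q}}$, and either exhibit a fourth word in $\atoms{\sigma}$ (contradicting the count $3$) or show the three words cannot coexist.

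\emph{Main obstacle.} The oscillation case is where the difficulty concentrates: since the supports of $\sigma_{\textbf{p}}$ and $\sigma_{\textbf{q}}$ overlap on $[m+1,M]$, one cannot recover $\textbf{p}$ and $\textbf{q}$ from $\sigma$ independently, and building a fourth word that avoids \emph{all} repeated and symmetric segments (Theorem~\ref{TheoremEquiv2}) requires a careful joint analysis of $\textbf{p}$ and $\textbf{q}$ — the example above shows no purely combinatorial slot count will close the gap. The secondary obstacle is proving surjectivity of $\varphi$ in the $j\geq i$ case, because $\pi$ need not itself satisfy the hypotheses of Lemma~\ref{nonosc}, so $\atoms{\pi}$ must be described through Theorem~\ref{TheoremEquiv2} directly.
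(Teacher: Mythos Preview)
The paper does \emph{not} prove this statement: it is presented in Section~5 as an open conjecture, supported only by computational evidence for $n\leq 8$ and by the involution case from \cite{OEC}. There is therefore no proof in the paper to compare your proposal against.

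As for the proposal itself, you have not given a proof but a strategy with two self-identified gaps, and both gaps are genuine. In the $j\geq i$ non-oscillation case, you would need $\varphi$ to be a bijection, but there is no reason every $\textbf{b}\in\atoms{\pi}$ contains the factor $j(j{-}1)\cdots i$ (or the letter $i$) at the position your inverse map requires: $\pi=\sigma\gamma^{-1}$ has minimum and maximum non-fixed points strictly inside $[m,M]$, and its one-element commutation classes are governed by \emph{those} new extremes, not by $i$ and $j$. Nothing in Theorem~\ref{TheoremEquiv2} forces the segment structure of $\textbf{b}$ to pass through $i$ in the right way, so surjectivity may simply fail and the bound $|\atoms{\sigma}|\leq|\atoms{\pi}|$ could be strict. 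In the oscillation case your slot analysis is sound as far as it goes, but the final step --- ``either exhibit a fourth word or show the three words cannot coexist'' --- is precisely the content of the conjecture in this case, and you give no mechanism for carrying it out; the overlap of the supports of $\sigma_{\textbf{p}}$ and $\sigma_{\textbf{q}}$ that you flag is exactly what blocks a clean argument. In short, the proposal correctly isolates where the difficulty lies but does not resolve it, which is consistent with the conjecture remaining open in the paper.
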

Another interesting problem is to characterize the permutations that admit one-element commutation classes. For involutions, such a characterization can be given in terms of pattern avoidance \cite{OEC}. Unfortunately, this approach does not extend to the general case. The closest result we have is Lemma \ref{lemma:endpoints}, which provides a necessary but not sufficient condition for a permutation to admit one-element commutation classes. \par 

Another direction is to study this problem in other Coxeter groups. As mentioned earlier, the symmetric group is a finite Coxeter group. These groups are divided into types, being the symmetric group of type $A$. For type $B$, it has been proved that its longest element has exactly two one-element commutation classes \cite{MAMEDE2022113055}. Using computer searches for both types $B$ and $D$ we found no element with more than four one-element commutation classes in type $B$ and none with more than twelve in type $D$.

\section{Acknowledgments}
The authors were financially supported by the Fundação para a Ciência e a Tecnologia (Portuguese Foundation for Science and Technology) under the scope of the projects UID/00324/2025 (https://doi.org/10.54499/UID/00324/2025) (Centre for Mathematics of the University of Coimbra). Diogo Soares was additionally supported by the Fundação para a Ciência e Tecnologia through the grant with reference number 2022.11167.BD.
%

\bibliographystyle{plain}
\bibliography{references}

\end{document}